\documentclass[12pt]{article}
\usepackage{amsmath,amsthm,amssymb,amsfonts, fancyhdr, color, comment, graphicx, environ,mathtools }
\usepackage{authblk}
\usepackage[sort]{natbib}
\bibliographystyle{abbrvnat}
\setcitestyle{authoryear,open={(},close={)}} \usepackage[margin=1in]{geometry}

\usepackage{xcolor}
\newenvironment{keywords}{\textbf{\textit{Keywords---}}\begin{small}}{\end{small}}

\title{Asymptotic confidence sets for random linear programs}
\usepackage{times}
\usepackage[capitalize]{cleveref}
\usepackage{booktabs}
\usepackage{amsmath,amssymb,mathtools}
\usepackage{xifthen}
\usepackage{dsfont}

\newcommand{\cC}{\mathcal{C}}

\newcommand{\cI}{\mathcal{I}}

\newcommand{\cN}{\mathcal{N}}

\newcommand{\GG}{\mathbb{G}}

\newcommand{\NN}{\mathbb{N}}

\newcommand{\RR}{\mathbb{R}}

\newcommand{\bm}{\mathbf}

\newcommand*{\kl}[3][]{\ifthenelse{\isempty{#1}}{\operatorname{D}(#2\,\|\,#3)}{\operatorname{D}(#2\,\|\,#3\mid#1)}}

\DeclarePairedDelimiter{\triplenorm}{\vert\kern-0.25ex\vert\kern-0.25ex\vert}{\vert\kern-0.25ex\vert\kern-0.25ex\vert}

\newcommand*{\p}[1]{\mathbb P\left\{#1\right\}}

\newcommand*{\defeq}{\coloneqq}

\DeclareMathOperator*{\argmin}{argmin}

 \newtheorem{assumption}{Assumption}
\newtheorem{definition}{Definition}
\newtheorem{theorem}{Theorem}
\newtheorem{corollary}[theorem]{Corollary}
\newtheorem{lemma}[theorem]{Lemma}
\newtheorem{proposition}[theorem]{Proposition}

\newcommand*{\weakto}{\overset{D}\to}
\newcommand*{\bb}{\bm{b}}
\newcommand*{\bc}{\bm{c}}
\newcommand*{\bx}{\bm{x}}
\newcommand*{\bA}{\bm{A}}
\newcommand*{\bz}{\bm{0}}
\newcommand*{\val}{f}
\newcommand*{\optvert}{\bm{V}^*}
\newcommand*{\I}{\mathrm{I}}
\newcommand*{\bp}{\bm{p}}
\newcommand*{\proj}{\bar \bx^*_n}
\newcommand*{\supp}{\mathsf{h}}
\newcommand*{\haus}{\rho_H}
\newcommand*{\limfunc}[1]{\mathsf{g}_{#1}}

\author[1]{Shuyu Liu}
\author[2]{Florentina Bunea}
\author[1,3]{Jonathan Niles-Weed}
\affil[1]{Courant Institute of Mathematical Sciences, NYU}
\affil[2]{Department of Statistics and Data Science, Cornell University}
\affil[3]{Center for Data Science, NYU}

\begin{document}

\maketitle

\begin{abstract}Motivated by the statistical analysis of the discrete optimal transport problem, we prove distributional limits for the solutions of linear programs with random constraints.
Such limits were first obtained by Klatt, Munk, \& Zemel (2022), but their expressions for the limits involve a computationally intractable decomposition of $\RR^m$ into a possibly exponential number of convex cones.
We give a new expression for the limit in terms of auxiliary linear programs, which can be solved in polynomial time.
We also leverage tools from random convex geometry to give distributional limits for the entire set of random optimal solutions, when the optimum is not unique.
Finally, we describe a simple, data-driven method to construct asymptotically valid confidence sets in polynomial time.
\end{abstract}

\begin{keywords}
Linear programming, distributional inference, confidence sets
\end{keywords}

\section{Introduction}
Linear programming is one of the core techniques in convex optimization, capturing many canonical problems such as maximum flow, shortest path, bipartite matching, and optimal transport.
Linear programs (LPs) are notable for their versatility, their rich combinatorial theory, and their algorithmic tractability: the pioneering work of \citet{Hac79} showed that LPs can be solved in polynomial time, and the last 70 years of research in theoretical computer science and scientific computing have made solving linear programs a ``mature technology'' in practice \citep{boyd2004convex}.

We consider throughout a standard form LP, given by
\begin{equation}\label{primal}
	\min_{\bm{x}\in\mathbb{R}^m} \langle\bm{c},\bm{x}\rangle,\qquad \text{s.t.}\ \bm{Ax}=\bm{b},\ \bm{x}\geq\bm{0},
\end{equation}
where $\bm{A}\in \mathbb{R}^{k\times m}$, $\bm{b}\in \mathbb{R}^k$ and $\bm{c}\in \mathbb{R}^m$.
The goal of this paper is to understand the distributional behavior of solutions to \cref{primal} when $\bb$ is replaced by a random vector $\bb_n$.
We assume the existence of a random variable $\GG$ such that
\begin{equation}
	r_n (\bb_n - \bb) \weakto \GG
	\label{b_limit}
\end{equation}
for some rate $r_n \to \infty$, and we will seek a corresponding limit law for the solutions to~\cref{primal}.
This setting is motivated by applications of linear programming in statistics and machine learning, where the ``right-hand side'' vector $\bb$ corresponds to random capacities, demands, or prices.
An important example, which motivates many of the developments of this paper, is the linear programming formulation of the optimal transportation problem between discrete distributions, where the vector $\bb$ corresponds to the probability mass function of the two measures.
The statistician who only has access to these measures via samples can compute a solution to an \emph{empirical optimal transport} problem by replacing $\bb$ with an estimator $\bb_n$.
Quantifying the uncertainty in the resulting solution requires constructing an asymptotic confidence set for this random linear program.

Obtaining distributional limit results for solutions to random optimization problems is, of course, a well studied subject both in scientific computing and in statistics~\citep{shapiro1991asymptotic, linderoth2006empirical, PolJud92,DupWet88,KinRoc93}, but the LP lacks the regularity conditions necessary to apply classical results: neither smoothness nor strong convexity holds for~\cref{primal} in general, solutions are generally not unique, and optimal solutions to \cref{primal} always lie on the boundary of the feasible set.
By contrast, standard distributional limit results, for instance in the analysis of M-estimators, require local strong convexity, uniqueness, and that the solution to the population-level problem lies in the relative interior of the feasible set \citep[see, e.g.,][]{vaart_1998}.
The challenges met in circumventing these classical conditions are well known \citep{MOMBound,Che54,AitSil58}.
Statistically, the lack of regularity in~\cref{primal} is the source of several pathologies: even when the solution to \cref{primal} is unique, the limiting distribution will in general not be Gaussian, and if there are multiple solutions to \cref{primal} it is not even clear how to formulate the desired distributional limit results.
The typical path forward, not taken in this work, is to impose extra conditions to guarantee that uniqueness holds and to focus on settings where there is sufficient regularity to ensure a Gaussian limit.

Let us give a very simple example which illustrates some of the difficulties of this problem.
Consider a $2 \times 2$ optimal transport problem:
\begin{align*}
	\min_{\pi \in \RR^{2 \times 2}} \pi_{12} + \pi_{21}\,, \qquad & \textrm{s.t.}\  \pi \bm{1} = \bm{r}, \pi^\top \bm{1} = \bm{s}, \pi \geq \bz\,,
\end{align*}
where $\bm{r} = \bm{s} = (1/2, 1/2)$.
In this case, the target solution $\pi^* = (1/2, 0; 0, 1/2)$ is unique.
If we suppose that $\bm{r}$ is replaced by random vector in the probability simplex $\bm{r}_n = (r^{(1)}_n, r^{(2)}_n)$, then the optimal solution to the perturbed program is
\begin{equation*}
	\hat \pi_n = (1/2, r^{(1)}_n - 1/2; 0, r^{(2)}_n) \mathds{1}_{\{ r_{n}^{(1)} > r_n^{(2)} \}} + (r^{(1)}_n, 0; r^{(2)}_n - 1/2, 1/2)\mathds{1}_{\{ r_{n}^{(1)} \leq r_n^{(2)} \}}\,,
\end{equation*}
and if we assume $\sqrt n (\bm{r}_n - \bm{r})$ converges in distribution to a centered Gaussian vector, the rescaled solution $\sqrt n(\hat \pi_n - \pi^*)$ converges to a mixture distribution with two non-Gaussian components.
A $3 \times 3$ version of the same problem, with the same objective function and $\bm{r} = \bm{s} = (1/3, 1/3, 1/3)$, has multiple optimal solutions, and \emph{a priori} it is not clear how to quantify the uncertainty of a solution obtained when $\bm{r}$ is replaced by a random counterpart.

The challenges in obtaining distributional limits for LPs were first tackled by the pioneering work of \cite{klatt2022limit}, who derived distributional limits for \eqref{primal} in a very general setting.
Their results are expressed in terms of a partition of $\RR^m$ into closed convex cones; the restriction of the limiting distribution on each cone is a linear function of the limit of the sequence $r_n (\bb_n - \bb)$.
To handle the fact that solutions to \eqref{primal} may not be unique, \cite{klatt2022limit} adopt a framework of an algorithmic flavor: they assume, informally speaking, that there exists a consistent, possibly randomized, selection procedure to specify a solution within the optimal set.
This strategy allows them to prove a distributional limit for the particular optimal solution selected by this procedure, without having to assume that the optimal solution is unique.

Despite the completeness and sophistication of their approach,  \cite{klatt2022limit} leave open several fundamental questions.
First, it is not clear whether it is possible to sample from their limit laws in polynomial time: all of their limits are expressed in terms of a decomposition of $\RR^m$ into a possibly exponential number of closed convex cones.
Even evaluating the functions involved  in their limiting expressions therefore appears to be computationally intractable.
Second, their approach to non-unique solutions cleverly sidesteps the need to assume that the optimal solution is unique; however, the resulting limit law does not give insight into the overall geometry of the random solution set.
Third, even ignoring issues of computational feasibility, their results do not yield a method to obtain asymptotically valid confidence sets from data, because the limiting distributions they obtain depend on the (typically unknown) optimal solutions to the original LP.

In this work, we propose solutions to these three questions.
First, in the case the solution to the original LP is unique, we give a new representation of the limit that can be sampled from in polynomial time; in fact, we show that the limit can be generated by solving an auxiliary random linear program.
Second, in the general (non-unique) case, we define and prove a distributional limit for the optimal solutions \emph{in the space of convex sets}---the resulting limit captures the random geometry of the entire solution set.
Finally, we develop a practical and computationally cheap data-driven method for constructing asymptotically valid confidence sets.

\section{Preliminaries on linear programming}\label{prelim}
In this section, we recall some facts about the structure of linear programs.
We point the reader to standard reference works \citep{NoceWrig06,bradley1977applied,boyd2004convex,BerTsi97} for additional background information.

We denote the set of optimal solutions to \eqref{primal} by
\begin{equation}
\displaystyle 
\bm{x}^*(\bm{b})\coloneqq\argmin_{\bm{x}\in\mathbb{R}^m} \langle\bm{c},\bm{x}\rangle,\qquad \textrm{s.t.}\ \bm{Ax}=\bm{b},\ \bm{x}\geq\bm{0}.
\label{solution_set}
\end{equation}
The notation $\bx^*(\bb)$ emphasizes that this optimal set depends on the right-hand side $\bb$.
In general, LPs do not possess unique solutions, so that typically $|\bm{x}^*(\bm{b})| \neq 1$.
However, if the solution is unique, by slight abuse of notation we write $\bm{x}^*(\bm{b})$ for both the (single-element) set of optimal solutions and for the optimal solution itself.
We sometimes refer to  $\bm{x}^*(\bm{b})$ as the set of ``target solutions,'' to contrast it with the random soultion set $\bx^*(\bb_n)$ obtained by replacing $\bb$ by its random counterpart. 
We denote the optimal objective value of \eqref{primal} by $\val(\bb)$.

Throughout, we make the following assumptions on \eqref{primal}.
\begin{assumption}
	The constraint matrix $\bm{A}$ has full rank, the optimal solution set $\bx^*(\bb)$ is nonempty and bounded, and~\eqref{primal} satisfies the Slater condition \citep[Section 5.2.3]{boyd2004convex}, i.e., $\exists \bm{x}_0\in\mathbb{R}^m$, such that $\bm{Ax}_0=\bm{b},\ \bm{x}_0>\bm{0}$.
\label{Assumption}
\end{assumption}
The assumption that $\bA$ is full rank is without loss of generality, as redundant constraints in the matrix can always be removed.
The assumption that $\bx^*(\bb)$ is nonempty and bounded is also made by \cite{klatt2022limit} and holds for many LPs of interest, including optimal transport problems.
Finally, the Slater condition is a standard assumption in convex programming and is only a minor strengthening of Assumption (B2) of \citet[see Lemma 5.4]{klatt2022limit}.

\subsection{Bases} For any subset $\I \subseteq \{1, \dots, m\}$, we denote by $\bm{A}_\I$ the $k \times |\I|$ submatrix of $\bm{A}$ formed by taking the columns of $\bm{A}$ corresponding to the elements of $\I$.
Analogously, for $\bm{x} \in \RR^m$, we write $\bm{x}_\I$ for the vector of length $|\I|$ consisting of the coordinates of $\bm{x}$ corresponding to $\I$.

\begin{definition}
	A set $\I \subseteq [m]$ is a \emph{basis} if
	\begin{equation}
	|\I|=k, \qquad \operatorname{rank}(\bm{A}_\I)=k\label{Intro:index_set}
	\end{equation}
\end{definition}
Given a basis $\I$, we can define the \emph{basic solution} $\bm{x}(\I;\bm{b})$ to be the vector $\bx$ satisfying 
\begin{equation}\label{basis_def}
	\begin{aligned}
	\bx_\I &= \bA_\I^{-1} \bb \\
	\bx_{\I^C} &= \bz\,.
	\end{aligned}
\end{equation}
Explicitly, $\bx(\I, \bb)$ is defined by setting the coordinates not in $\I$ to zero and inverting the matrix $\bA_\I$ to obtain the values on the coordinates corresponding to $\I$.
This vector is a feasible solution to~\eqref{primal} if and only if the vector $\bA_\I^{-1} \bb$ is nonegative; if it is, we say that $\bm{x}(\I;\bm{b})$ is a \emph{basic feasible solution}.
By construction, basic feasible solutions have at most $k$ non-zero entries: if we denote the support (i.e., the set of non-zero entries) of a vector $\bx$ by $S(\bx)$, then
\begin{equation*}
	S(\bx(\I; \bb)) \subseteq \I\,.
\end{equation*}
This inclusion can be strict if the vector $\bA_\I^{-1} \bb$ has zero coordinates.
When the inclusion is strict, the solution is called \emph{degenerate}.
If $\bx$ is a degenerate basic feasible solution, then any basis $\I$ such that $S(\bx) \subseteq \I$ satisfies $\bx = \bx(\I; \bb)$; in particular, several different bases may give rise to the same (degenerate) basic feasible solution.

Geometrically, basic feasible solutions are precisely extreme points (vertices) of the feasible set of \eqref{primal} \citep[Theorem 2.3]{BerTsi97}; we will therefore use the terms basic feasible solution and vertex interchangeably in what follows.
Our justification for focusing on basic feasible solutions is the ``fundamental theorem of linear programming''~\citep[Theorem 2.7]{BerTsi97}, which ensures that if any optimal solution to~\eqref{primal} exists, then there exists an optimum which is a basic feasible solution.

We denote by $\cI(\bb)$ the set of all bases $\I$ for which $\bx(\I; \bb)$ is a basic feasible solution, and by $\cI^*(\bb)$ the set of all bases $\I$ for which $\bx(\I; \bb)$ is an optimal solution.
The set of optimal vertices of \eqref{primal} is defined by
\begin{equation}
	\optvert(\bb) \defeq \{\bx(\I; \bb): \I \in \cI^*(\bb)\}\,.
	\label{optvert}
\end{equation}
The general theory of polyhedral geometry implies that since $\bx^*(\bb)$ is bounded, we may write $\bx^*(\bb) =  \operatorname{conv}(\optvert(\bb))$, the convex hull of $\optvert(\bb)$.
Moreover, the assumption that $\bx^*(\bb)$ is bounded implies that $\bx^*(\bb')$ is bounded for all perturbations $\bb'$.\footnote{This follows from the fact that $\bx^*(\bb)$ and $\bx^*(\bb')$ are polyhedra with the same recession cone, which must equal $\{\bz\}$ since $\bx^*(\bb)$ is bounded.}
We therefore also have $\bx^*(\bb') =  \operatorname{conv}(\optvert(\bb'))$.

\begin{table}
	\centering
	\begin{tabular}{@{}ll@{}} \toprule
		Symbol & Meaning\\ \midrule
		$\val(\bb)$ & Optimal objective value of \eqref{primal} \\
		$\bb$, $\bb_n$ & True and random right-hand side constraints, \eqref{b_limit}\\
		$S(\bx)$ & Set of nonzero coordinates of $\bx$ \\
		$\bx^*(\bb)$ & Set of optimal solutions, \eqref{solution_set} \\
		$\bx(\I; \bb)$ & Basic feasible solution, \eqref{basis_def} \\
		$\cI(\bb)$, $\cI^*(\bb)$& Set of feasible and optimal bases \\
		$\optvert(\bb)$ & Extreme points of $\bx^*(\bb)$, \eqref{optvert} \\
		$\supp_K$ & Support function of $K$, \eqref{support_funct} \\
		 \bottomrule
	\end{tabular}
	\caption{Important notation}
	\label{notation}
\end{table}

We summarize the main notation used in this paper in Table~\ref{notation}.

\section{Vertex and base stability}
This section presents two stability results that are central to our analysis.
Though simple and likely well known, we present them explicitly here to highlight the important role they play in our theorems.

The first is a Lipschitzian property of polytopes due to \cite{walkup1969lipschitzian}, which shows that the set of optimal solutions of \cref{primal} is Lipschitz with respect to the Hausdorff distance.
\begin{proposition}\label{lipschitz_vertex}
	Under \cref{Assumption}, there exists a constant $C = C(\bA, \bc) > 0$ such that if $\bb_1, \bb_2 \in \RR^k$ are such that $\bx^*(\bb_1)$ and $\bx^*(\bb_2)$ are nonempty, then $\haus(\bx^*(\bb_1), \bx^*(\bb_2)) \leq C \|\bb_1 - \bb_2\|$.
\end{proposition}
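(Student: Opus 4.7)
The plan is to combine Hoffman's lemma on the Lipschitz stability of polyhedral systems under right-hand side perturbations with the Lipschitz continuity of the LP value function $\val(\bb)$ that follows from LP duality. The key observation is that the optimal set is itself a polyhedron whose defining matrix depends only on $(\bA,\bc)$:
\[ \bx^*(\bb) = \{\bx \in \RR^m : \bA\bx = \bb,\ \bx \geq \bz,\ \langle\bc,\bx\rangle \leq \val(\bb)\}. \]
Hoffman's lemma therefore supplies a constant $H = H(\bA,\bc) > 0$ such that for every $\bx \geq \bz$,
\[ d(\bx, \bx^*(\bb_2)) \leq H\bigl(\norm{\bA\bx - \bb_2} + [\langle\bc,\bx\rangle - \val(\bb_2)]_+\bigr). \]

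\textbf{Lipschitz continuity of $\val$.} To control the second summand I will establish that $\bb \mapsto \val(\bb)$ is globally Lipschitz on its effective domain, with a constant depending only on $(\bA,\bc)$. Under \cref{Assumption}, strong duality gives
\[ \val(\bb) = \max\{\langle\bm{y},\bb\rangle : \bA^\top \bm{y} \leq \bc\}, \]
and the dual feasible set $D$ is $\bb$-independent. Because $\bA$ has full row rank, $D$ contains no lines and therefore has a finite, nonempty vertex set $V_D$ (depending only on $(\bA,\bc)$); boundedness of the primal optimum ensures that the dual optimum is attained at some element of $V_D$. Consequently $\val(\bb) = \max_{\bm{y} \in V_D}\langle\bm{y},\bb\rangle$ is piecewise linear and globally Lipschitz on its effective domain with constant $L \defeq \max_{\bm{y} \in V_D}\norm{\bm{y}}$.

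\textbf{Concluding bound and main difficulty.} Plugging any $\bx_1 \in \bx^*(\bb_1)$ into the Hoffman bound for $\bx^*(\bb_2)$, the first summand equals $\norm{\bb_1 - \bb_2}$ and the second is at most $|\val(\bb_1) - \val(\bb_2)| \leq L\norm{\bb_1 - \bb_2}$ by the previous step, so $d(\bx_1, \bx^*(\bb_2)) \leq H(1+L)\norm{\bb_1 - \bb_2}$. The symmetric bound (swapping the roles of $\bb_1$ and $\bb_2$) yields $\haus(\bx^*(\bb_1), \bx^*(\bb_2)) \leq C\norm{\bb_1 - \bb_2}$ with $C = H(1+L) = C(\bA,\bc)$. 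The only genuinely delicate point is verifying that $H$ and $L$ depend only on $(\bA,\bc)$ and not on the particular $\bb_i$: Hoffman's constant is determined by the coefficient matrix of the defining inequality system alone, and the full row rank of $\bA$ is what guarantees that the $\bb$-independent dual polyhedron $D$ has vertices, making $L$ finite and well-defined. The Slater condition from \cref{Assumption} enters only insofar as it is needed to invoke strong duality and ensure the dual optimum is actually attained.
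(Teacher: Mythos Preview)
Your proposal is correct and follows essentially the same strategy as the paper: first establish Lipschitz continuity of the value function $\val$ via LP duality (exactly as the paper does in part~4 of Lemma~\ref{continuity_lemmas}, bounding by $\max_{\bm{\lambda}\in\bm{\Lambda}}\|\bm{\lambda}\|$ over dual vertices), then invoke a polyhedral stability result. The only cosmetic difference is that the paper cites the Walkup--Wets Lipschitzian theorem applied to $\tau(\bx)=(\bA\bx,\langle\bc,\bx\rangle)$ and $K=\RR^m_{\geq 0}$, whereas you appeal to Hoffman's lemma directly; these are two formulations of the same underlying fact, and your version has the mild advantage of being more explicit about how the constant $C=H(1+L)$ arises. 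One small remark: for linear programs strong duality already follows from the hypothesis that $\bx^*(\bb_i)\neq\emptyset$ (finite primal optimum), so Slater is not actually needed at this step.
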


The second proposition shows that optimal bases for $\bb'$ are also optimal for $\bb$.
\begin{proposition}\label{no_disappearing}
	Under \cref{Assumption}, there exists $\delta = \delta(\bA, \bb) > 0$ such that if $\|\bb' - \bb\| \leq \delta$, then $\cI^*(\bb')$ is nonempty and $\cI^*(\bb') \subseteq \cI^*(\bb)$.
\end{proposition}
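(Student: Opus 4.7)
The plan is to exploit the duality characterization of optimal bases. A basis $\I$ is optimal for $\bb''$ if and only if two conditions hold: \emph{primal feasibility}, namely $\bA_\I^{-1} \bb'' \geq \bz$, and \emph{dual feasibility}, namely $\bc - \bA^\top (\bA_\I^\top)^{-1} \bc_\I \geq \bz$. The key observation is that dual feasibility depends only on $\bA$ and $\bc$, not on $\bb''$. Thus the set $\cJ \subseteq 2^{[m]}$ of dual-feasible bases is a fixed finite collection, and membership in $\cI^*(\bb'')$ for any $\bb''$ reduces to checking primal feasibility within this finite list.

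For each $\I \in \cJ$, let $E_\I \defeq \{\bb'' \in \RR^k : \bA_\I^{-1} \bb'' \geq \bz\}$, which is closed (an intersection of $k$ halfspaces). Then $\cI^*(\bb'') = \{\I \in \cJ : \bb'' \in E_\I\}$. Partition $\cJ$ as $\cJ = \cI^*(\bb) \sqcup \cJ_{\mathrm{bad}}$, where $\cJ_{\mathrm{bad}} = \{\I \in \cJ : \bb \notin E_\I\}$. For each $\I \in \cJ_{\mathrm{bad}}$, closedness of $E_\I$ gives $d_\I \defeq \mathrm{dist}(\bb, E_\I) > 0$; setting $\delta_1 \defeq \min_{\I \in \cJ_{\mathrm{bad}}} d_\I > 0$ (a minimum over finitely many positive numbers), any $\bb'$ with $\|\bb' - \bb\| < \delta_1$ satisfies $\bb' \notin E_\I$ for every $\I \in \cJ_{\mathrm{bad}}$. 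Hence $\cI^*(\bb') \subseteq \cJ \setminus \cJ_{\mathrm{bad}} = \cI^*(\bb)$.

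It remains to show nonemptiness of $\cI^*(\bb')$ for $\bb'$ near $\bb$, which I expect to be the main technical obstacle since it requires ruling out infeasibility and unboundedness of the perturbed LP. For boundedness, since $\bx^*(\bb)$ is nonempty the dual of \eqref{primal} is feasible; feasibility of the dual is independent of $\bb$, so the primal for $\bb'$ is either infeasible or has an optimum. For feasibility, invoke the Slater condition: fix $\bx_0 > \bz$ with $\bA \bx_0 = \bb$, and use full rank of $\bA$ to pick a right inverse $\bA^+$ and set $\bx_0' \defeq \bx_0 + \bA^+(\bb' - \bb)$, so that $\bA \bx_0' = \bb'$ and $\bx_0' > \bz$ whenever $\|\bb' - \bb\| < \delta_2$ for a suitable $\delta_2 > 0$ depending on $\min_i (\bx_0)_i$ and $\|\bA^+\|$. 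Combining both, for $\bb'$ with $\|\bb' - \bb\| < \delta_2$, the problem \eqref{primal} with right-hand side $\bb'$ has a (nonempty) optimal set; by the fundamental theorem of linear programming \citep[Theorem 2.7]{BerTsi97}, some optimum is a vertex, so $\cI^*(\bb') \neq \emptyset$. Taking $\delta \defeq \min(\delta_1, \delta_2)$ completes the argument.
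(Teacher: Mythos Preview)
Your argument has a genuine gap at the very first step: the claimed equivalence ``$\I$ is optimal for $\bb''$ if and only if $\bA_\I^{-1}\bb''\geq\bz$ and $\bc-\bA^\top(\bA_\I^\top)^{-1}\bc_\I\geq\bz$'' is only an implication, not an equivalence, under the paper's definition of $\cI^*(\bb'')$. The paper defines $\cI^*(\bb'')$ as the set of bases $\I$ for which the basic solution $\bx(\I;\bb'')$ is optimal. At a \emph{degenerate} optimal vertex several bases represent the same point, and not all of them need have nonnegative reduced costs. Concretely, take $\bA=\begin{pmatrix}1&1&1&0\\1&-1&0&1\end{pmatrix}$, $\bb=(1,0)$, $\bc=(1,1,0,0)$; Assumption~1 holds and the unique optimum is $(0,0,1,0)$. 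The basis $\{1,3\}$ yields this vertex, so $\{1,3\}\in\cI^*(\bb)$, yet its reduced cost on $x_4$ equals $-1$, so $\{1,3\}\notin\cJ$. Consequently your identity $\cI^*(\bb'')=\{\I\in\cJ:\bb''\in E_\I\}$ fails (only ``$\supseteq$'' holds), and the chain $\cI^*(\bb')\subseteq\cJ\setminus\cJ_{\mathrm{bad}}=\cI^*(\bb)$ breaks at the first inclusion: nothing in your proof rules out a basis $\I\in\cI^*(\bb')\setminus\cJ$. Such bases do arise---in the example above, perturbing to $\bb'=(1+\epsilon,0)$ keeps $\{1,3\}$ in $\cI^*(\bb')$ while $\{1,3\}\notin\cJ$.

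The paper's proof sidesteps this by never invoking the basic dual solution $(\bA_\I^\top)^{-1}\bc_\I$. Instead, for $\I_0\in\cI^*(\bb')$ it takes \emph{any} optimal dual vector $\lambda$ for the perturbed problem (which is automatically dual feasible for the original problem, since the dual feasible region is the same), and then verifies complementary slackness for the pair $(\bx(\I_0;\bb),\lambda)$ via the support inclusion $S(\bx(\I_0;\bb))\subseteq S(\bx(\I_0;\bb'))$, which follows from Lipschitz continuity of basic solutions once $\|\bb'-\bb\|$ is small. Your closed-cone argument for $E_\I$ would in fact suffice to prove $\cI(\bb')\subseteq\cI(\bb)$ (apply it to \emph{all} bases, not just those in $\cJ$); what is missing is a separate optimality argument that does not presuppose dual feasibility of the particular basis. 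The nonemptiness half of your proof is fine.
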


\section{A tractable limiting distribution when the target solution is unique}\label{unique}
In this section, we first consider the simplified setting where the target solution $\bx^*(\bb)$ is unique.
Even under this simplification, however, the limiting distribution obtained by \cite{klatt2022limit} does not have a tractable form.
In particular, it is not even clear whether it is possible to generate samples from this distribution in polynomial time.
The goal of this section is to obtain an expression for the limiting distribution that can be computed efficiently.

Stating this result requires defining a notion of distributional convergence suitable for a random set.
Even when $|\bx^*(\bb)| = 1$, it is possible that $|\bx^*(\bb_n)| > 1$.
This situation can arise when $\bx^*(\bb)$ is \emph{unique but degenerate}, i.e., when there exist multiple optimal bases in $\cI^*(\bb)$.
Even if these bases all give rise to the same solution $\bx^*(\bb)$ in the original program, they can correspond to \emph{different} optimal solutions when $\bb$ is replaced by $\bb_n$.
In this situation, $|\bx^*(\bb_n)| > 1$, and it is not possible to formulate a distributional limit for $r_n (\bx^*(\bb_n) - \bx^*(\bb))$ viewed as the difference of two vectors in $\RR^m$.
However, when $|\bx^*(\bb)| = 1$, we can consider the set defined by translating the elements of $\bx^*(\bb_n)$ by $\bx^*(\bb)$ and rescaling them by $r_n$:
\begin{equation*}
	r_n (\bx^*(\bb_n) - \bx^*(\bb)) \defeq \{r_n (\bx - \bx^*(\bb)): \bx \in \bx^*(\bb_n)\} \subseteq \RR^m\,.
\end{equation*}

Our first main result is that this random set enjoys a \emph{set-valued} distributional limit, with limit equal to the distribution of the optimal set of a random auxiliary linear program.\footnote{To define weak convergence in this setting, we view these random sets as random elements in the metric space of compact subsets of $\RR^m$ equipped with the Hausdorff distance, and weak convergence means, as usual, the convergence of expectations of bounded, continuous functions in this topology \citep{molchanov2005theory,Kin89}.}
\begin{theorem}\label{thm:unique}
	Suppose that \cref{primal} satisfies \cref{Assumption}.
	If $\bb_n$ satisfies the distributional limit \cref{b_limit} and $|\bx^*(\bb)| = 1$, then
	\begin{equation}\label{unique_convergence}
		r_n (\bx^*(\bb_n) - \bx^*(\bb)) \weakto \bp^*_\bb(\GG)\,,
	\end{equation}
	where $\bp^*_\bb(\GG)$ is the set of optimal solutions to the following linear program:
	\begin{equation}
		\min \langle \bc, \bp \rangle: \bA \bp = \GG, \quad \bp_{i} \geq 0 \quad \forall i \notin S(\bx^*(\bb))\,.
		\label{unique_auxiliary_lp}
	\end{equation}
\end{theorem}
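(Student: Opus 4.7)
My plan is to pass via Skorokhod's representation to an almost-sure statement, reformulate both sides of~\eqref{unique_convergence} as optimal sets of LPs that differ only in their right-hand side, and then invoke the Lipschitz stability of \cref{lipschitz_vertex}.

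By Skorokhod's theorem, I may assume $\GG_n := r_n(\bb_n - \bb) \to \GG$ almost surely on a common probability space, reducing the theorem to almost-sure Hausdorff convergence of the rescaled solution set to $\bp^*_\bb(\GG)$. Under the change of variables $\bp = r_n(\bx - \bx^*(\bb))$, the set $r_n(\bx^*(\bb_n) - \bx^*(\bb))$ equals the optimal set of
\begin{equation*}
\min_{\bp \in \RR^m} \bc^T \bp \quad \text{s.t.}\ \bA \bp = \GG_n,\ \bp_i \geq -r_n\, \bx^*(\bb)_i \text{ for all } i.
\end{equation*}
For $i \in S(\bx^*(\bb))$ the lower bound diverges to $-\infty$ and becomes asymptotically vacuous; for $i \notin S(\bx^*(\bb))$ it reduces to $\bp_i \geq 0$, matching~\eqref{unique_auxiliary_lp}.

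The central step is to replace the diverging lower bounds by a uniform finite bound without changing the optimal set. \cref{lipschitz_vertex} applied to~\eqref{primal} gives $\|\bp_n\| \leq C\|\GG_n\|$ for every $\bp_n$ in the rescaled optimal set, which is bounded in $n$ since $\GG_n \to \GG$; a recession-cone argument (any nonzero direction $\bp_0$ with $\bA \bp_0 = 0$, $\bc^T \bp_0 = 0$, and $(\bp_0)_i \geq 0$ on $S(\bx^*(\bb))^c$, added to $\bx^*(\bb)$ with small positive coefficient, would produce a second optimum of~\eqref{primal}, contradicting uniqueness) shows $\bp^*_\bb(\GG)$ is also bounded. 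Choose $M$ strictly exceeding both bounds. For $n$ large enough that $r_n\, \bx^*(\bb)_i > M$ for every $i \in S(\bx^*(\bb))$, the rescaled LP has the same optimal set as the LP with uniform lower bound vector $\bm{L}$ (with $L_i = 0$ on $S(\bx^*(\bb))^c$ and $L_i = -M$ on $S(\bx^*(\bb))$), since the tighter bound is inactive at every optimum; the same substitution leaves $\bp^*_\bb(\GG)$ unchanged.

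The two modified LPs now differ only in their right-hand side. The affine shift $\bq = \bp - \bm{L}$ converts them to standard form, and the hypotheses of \cref{lipschitz_vertex} transfer: full rank is preserved, boundedness has been established, and Slater is inherited from the Slater point $\bx_0$ of~\eqref{primal} by taking $\bp_0 = \bp' + t(\bx_0 - \bx^*(\bb))$, with $\bp'$ any particular solution of the relevant equality constraint and $t$ sufficiently large. The resulting Lipschitz bound
\begin{equation*}
\haus\!\bigl(r_n(\bx^*(\bb_n) - \bx^*(\bb)),\, \bp^*_\bb(\GG)\bigr) \leq C' \|\GG_n - \GG\| \to 0 \quad \text{a.s.}
\end{equation*}
gives the conclusion; weak convergence of random compact sets follows from almost-sure Hausdorff convergence. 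The subtlest part is the truncation argument, which rests on the a priori boundedness granted by $|\bx^*(\bb)| = 1$; once that reduction is in place, the theorem is a direct consequence of the Lipschitzian property of LPs.
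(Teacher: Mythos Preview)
Your proof is correct and takes a genuinely different route from the paper's. The paper derives \cref{thm:unique} as a corollary of \cref{thm:non-unique}: it first establishes directional Hadamard differentiability of the map $\bb \mapsto \supp_{\bx^*(\bb)}$ from $\RR^k$ into $\cC(\mathbb{S}^{m-1})$ (the content of \cref{hadamard}), applies a functional delta method to obtain the support-function limit $\limfunc{\GG}$, and then in the unique case identifies $\limfunc{\GG}$ as the support function of $\bp^*_\bb(\GG)$ via the same recession-cone argument you give. Your argument bypasses support functions and Hadamard differentiability entirely, working directly on the LP after the change of variables and reducing everything to the Walkup--Wets Lipschitz bound of \cref{lipschitz_vertex}. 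This is more elementary and self-contained for the unique case, but the truncation step leans essentially on $|\bx^*(\bb)| = 1$ and does not extend to \cref{thm:non-unique}, whereas the paper's support-function machinery handles both theorems at once.

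One minor caveat: your Slater verification for the truncated--shifted LP does not quite work as written, since $(\bx_0 - \bx^*(\bb))_i$ can be negative for $i \in S(\bx^*(\bb))$, and taking $t$ large then drives those coordinates below $-M$ rather than above it. This is harmless, however: the statement and proof of \cref{lipschitz_vertex} require only that the optimal sets at the two right-hand sides be nonempty (the constant $C$ depends only on $\bA$ and $\bc$), and you have already established nonemptiness on both sides, so the Slater check is superfluous.
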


The continuous mapping theorem implies that continuous functionals of the set $r_n(\bx^*(\bb_n) - \bx^*(\bb))$ also enjoy weak convergence.
To give a concrete example of the statistical implications of this fact, consider the problem of obtaining a confidence set for $\bx^*(\bb)$.
Doing so requires knowing how far $\bx^*(\bb)$ typically is from $\bx^*(\bb_n)$.
If we let $d(S, \bx) = \inf_{\bm{y} \in S} \|\bm{y} - \bm{x}\|$, then the following corollary shows that we can obtain a distributional limit for $d(\bx^*(\bb_n), \bx^*(\bb))$.
\begin{corollary}\label{unique_cor}
$r_n d(\bx^*(\bb_n), \bx^*(\bb)) \weakto d(\bp^*_\bb(\GG), \bz)$.
\end{corollary}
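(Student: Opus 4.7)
The plan is to deduce the corollary directly from Theorem~\ref{thm:unique} via the continuous mapping theorem, after recognizing the scaled point-to-set distance as a continuous functional of the scaled random set $r_n(\bx^*(\bb_n) - \bx^*(\bb))$.

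First I would rewrite the quantity of interest in the form
\begin{equation*}
	r_n d(\bx^*(\bb_n), \bx^*(\bb)) = \inf_{\by \in \bx^*(\bb_n)} \| r_n(\by - \bx^*(\bb))\| = d\bigl(r_n(\bx^*(\bb_n) - \bx^*(\bb)),\, \bz\bigr),
\end{equation*}
using that $|\bx^*(\bb)| = 1$ so that the translation is by a fixed point. This identifies the left-hand side as the image of the random compact set $K_n \defeq r_n(\bx^*(\bb_n) - \bx^*(\bb))$ under the functional $\Phi(K) \defeq d(K, \bz) = \inf_{\bz' \in K} \|\bz'\|$.

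Next, I would verify that $\Phi$ is continuous with respect to the Hausdorff metric on the space of nonempty compact subsets of $\RR^m$. This is a standard fact: if $\haus(K, K') \leq \varepsilon$, then every point of $K$ is within $\varepsilon$ of some point of $K'$ and vice versa, whence $|\Phi(K) - \Phi(K')| \leq \varepsilon$ by the triangle inequality. One should also check that $K_n$ and its limit are almost surely nonempty and compact: nonemptiness and compactness of $\bx^*(\bb_n)$ follow from \cref{Assumption} combined with the footnote observation that boundedness of $\bx^*(\bb)$ propagates to perturbations, while nonemptiness and boundedness of $\bp^*_\bb(\GG)$ can be argued analogously (indeed, the auxiliary LP inherits the same recession cone structure that enforced boundedness of $\bx^*(\bb)$, because its feasible set is the tangent cone to the feasible region of \cref{primal} at $\bx^*(\bb)$).

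Finally, I would apply Theorem~\ref{thm:unique}, which gives $K_n \weakto \bp^*_\bb(\GG)$ in the Hausdorff topology, together with the continuous mapping theorem for weak convergence of random elements in a metric space, to conclude that $\Phi(K_n) \weakto \Phi(\bp^*_\bb(\GG)) = d(\bp^*_\bb(\GG), \bz)$, which is exactly the stated convergence. The main subtlety, and the only point requiring genuine care beyond bookkeeping, is ensuring that the weak convergence in Theorem~\ref{thm:unique} is indeed set up on the correct space of compact sets (so that $\Phi$ qualifies as a bounded-continuous-test-compatible functional). Since the theorem is stated using precisely this Hausdorff-metric framework, no additional work is needed.
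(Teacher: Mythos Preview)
Your proposal is correct and follows essentially the same approach as the paper: rewrite $r_n d(\bx^*(\bb_n),\bx^*(\bb))$ as $d(r_n(\bx^*(\bb_n)-\bx^*(\bb)),\bz)$, invoke the Hausdorff-continuity of $K\mapsto d(K,\bz)$, and apply the continuous mapping theorem to Theorem~\ref{thm:unique}. The paper's proof is terser on the auxiliary checks (nonemptiness, compactness, continuity of $\Phi$), but the argument is the same.
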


In words, the rescaled distance of the target solution $\bx^*(\bb)$ to the set of optimal solutions of the random program converges in distribution to the distance of zero to the optimal set of the random auxiliary LP. Importantly, this is a convex program, whose solution can be found in polynomial time.

Let us compare Corollary~\ref{unique_cor} with what would be obtained by a more standard approach.
If one finds estimators by solving an optimization problem that can yield multiple optima, a standard path to inference consists in first 
identifying  a subset of them that  are close to one another, and then deriving the limiting distribution of any one of them, relative to the unique target.  
By contrast, Corollary \ref{unique_cor} gives information about the distance of $\bx^*(\bb)$ to the whole set of optima for the random program.

We stress our limit law is equivalent to the one obtained by \citet[Theorem 3.5]{klatt2022limit}.
The benefit of \cref{thm:unique} is that $\bp^*_\bb(\GG)$ is given explicitly: though this set can be large, it is algorithmically accessible since it possesses an explicit polyhedral representation in terms of separating hyperplanes. This implies, for instance, that it is possible to solve convex optimization problems involving $\bp^*_\bb(\GG)$ in polynomial time via the ellipsoid method~\cite{}. On the other hand, \citet{klatt2022limit} prove the same result but where the expression on the right side is a sum over a decomposition of $\RR^m$ into a possibly exponential number of pieces; such a decomposition typically cannot be evaluated in polynomial time.

When the unique optimal solution $\bx^*(\bb)$ is also non-degenerate, then \cref{no_disappearing} implies that for $\bb_n$ sufficiently close to $\bb$, the perturbed linear program also possesses a unique solution.
In this situation, \cref{thm:unique} is a \emph{standard} distributional limit: asymptotically almost surely, the set $\bx^*(\bb_n)$ reduces to a singleton, and \cref{thm:unique} shows that the distributional limit of the vector $r_n(\bx^*(\bb_n) - \bx^*(\bb))$ is the (unique) solution to \cref{unique_auxiliary_lp}, which is just $\bx(\I^*; \GG)$ for the unique $\I^* \in \cI^*(\bb)$.
This recovers the limit for this simplified setting mentioned by \citet[discussion after Remark 3.2]{klatt2022limit}.

\section{Distributional convergence in the space of convex sets}\label{non-unique}
When $\bx^*(\bb)$ is not unique, the approach to defining a set-valued distributional limit taken in \cref{thm:unique} no longer succeeds.
Indeed, if $\bx^*(\bb_n)$ and $\bx^*(\bb)$ are general closed sets, then even if $\bx^*(\bb_n) \to \bx^*(\bb)$ in Hausdorff distance, the set
\begin{equation*}
	\bx^*(\bb_n) \ominus \bx^*(\bb) \defeq \{\bx - \bx': \bx \in \bx^*(\bb_n), \bx' \in \bx^*(\bb)\}
\end{equation*}
will not converge to $\{\bz\}$ in general, so that no meaningful limit of $r_n (\bx^*(\bb_n) \ominus \bx^*(\bb))$ exists.
In the non-unique case, \cite{klatt2022limit} therefore define a distributional limit under the additional assumption that there exists a consistent scheme for selecting a single element of $\bx^*(\bb_n)$ and $\bx^*(\bb)$; they then show that this selection satisfies a distributional limit in the classical sense.
This ingenious approach captures the behavior of practical algorithms for solving LPs, since reasonable LP solvers give rise to such selection schemes \cite[see][Lemma 5.5]{klatt2022limit}.
However, as in the case where the target solution is unique, their limiting distribution is expressed as a sum over a decomposition of $\RR^m$ into a possibly exponential number of pieces.
Moreover, their techniques do not give insight into the overall fluctuations of the random set $\bx^*(\bb_n)$.
By contrast, in the unique case, Theorem~\ref{thm:unique} shows that it is possible to obtain simultaneous control over the whole random set.

In this section, we leverage techniques from random convex geometry to obtain similar results for the non-unique case.
Unlike Theorem~\ref{thm:unique}, Theorem~\ref{thm:non-unique} goes beyond the setting analyzed by \cite{klatt2022limit}.
Like Theorem~\ref{thm:unique}, we state our convergence results in terms of the optimal solutions to a random auxiliary LP, implying that evaluating the limits we obtain can be computationally tractable in applications.

To formulate our distributional limit, we adopt a strategy developed by \cite{artstein1975strong}, \cite{weil1982application}, and independently by \cite{lyashenko1983statistics} to prove central limit theorems for random compact sets.
To any compact, convex set $K \subseteq \RR^m$, we associate its \emph{support function} $\supp_K: \mathbb{S}^{m-1} \to \RR$ defined by
\begin{equation}
	\supp_K(\alpha) \defeq \sup_{\bx \in K} \langle \alpha, \bx \rangle\,.
	\label{support_funct}
\end{equation}
The mapping $K \mapsto \supp_K$ provides an \emph{isometric embedding} of the metric space of convex, compact sets equipped with the Hausdorff metric into the Banach space $\cC(\mathbb{S}^{m-1})$ of continuous functions on the sphere equipped with the uniform norm \cite[see][section 3.1.2]{molchanov2005theory}.
Explicitly, given two compact, convex sets $K_1$ and $K_2$, we have
\begin{equation}\label{isometry}
	\haus(K_1, K_2) = \sup_{\alpha \in \mathbb{S}^{m-1}} |\supp_{K_1}(\alpha) - \supp_{K_2}(\alpha)|\,.
\end{equation}
In particular, the map from a convex set to its support function is injective; $K$ can be recovered from $\supp_K$ by taking its Legendre transform.
This embedding has two profound implications.
First, the geometry of convex sets is entirely captured by their support functions.
In particular, we may associate to a random convex set its support function, viewed as a random element of $\cC(\mathbb{S}^{m-1})$, and study its distribution instead.\footnote{We omit a detailed discussion of measurability here, but it can be shown that if the space of convex, compact subsets of $\RR^m$ is equipped with an appropriate $\sigma$-algebra (known as the Effros $\sigma$-algebra), then for a random set $K$ the support function $\supp_K$ is indeed a random variable in $\cC(\mathbb{S}^{m-1})$ \cite[see][Proposition 2.5]{molchanov2005theory}.}
Second, since $\cC(\mathbb{S}^{m-1})$ is a Banach space, we may leverage the theory of probability in Banach spaces to prove limit theorems for support functions.

Our main result of this section is a distributional limit for the set $\bx^*(\bb_n)$.
Once again, it is stated in terms of the solutions to an auxiliary linear program.
\begin{theorem}\label{thm:non-unique}
	Let $\supp_n$ and $\supp$ be the support functions of $\bx^*(\bb_n)$ and $\bx^*(\bb)$, respectively.
	Suppose that \eqref{primal} satisfies \cref{Assumption}.
	If $\bb_n$ satisfies the distributional limit \eqref{b_limit}, then
	\begin{equation}
		r_n(\supp_n - \supp) \weakto \limfunc{\GG}\,,
	\end{equation}
	where $\limfunc{\GG}$ is the random element of $\cC(\mathbb{S}^{m-1})$ defined by
	\begin{equation*}
		\limfunc{\GG}(\alpha) = \sup_{\bx \in \mathbf{q}^*_{\alpha}(\GG)} \langle \alpha, \bx \rangle\,,
	\end{equation*}
	and $\mathbf{q}^*_{\alpha}(\GG)$ is the set of optimal vertex solutions to the following linear program:\footnote{The function $\supp$ is only differentiable almost everywhere, but since $\limfunc{\GG}(\alpha)$ is almost surely continuous it suffices to specify its values on a dense subset.}
	\begin{equation}\label{non_unique_lp}
		\min \langle \bc, \bm{q} \rangle: \bA \bm{q} = \GG, \quad \bm{q}_{i} \geq 0 \quad \forall i \notin S(\nabla \supp(\alpha))\,.
	\end{equation}
\end{theorem}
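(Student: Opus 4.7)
The plan is to prove weak convergence in the Banach space $\cC(\mathbb{S}^{m-1})$ via the classical recipe: convergence of finite-dimensional distributions plus tightness. The starting point is \cref{no_disappearing}, which for $n$ large ensures $\cI^*(\bb_n) \subseteq \cI^*(\bb)$, so both support functions admit the common finite representation
\begin{equation*}
	\supp_n(\alpha) = \max_{\I \in \cI^*(\bb_n)} \langle \alpha, \bx(\I; \bb_n)\rangle, \qquad \supp(\alpha) = \max_{\I \in \cI^*(\bb)} \langle \alpha, \bx(\I; \bb)\rangle,
\end{equation*}
where $\bx(\I; \bb)$ is linear in $\bb$ and the maximum is over the fixed finite set $\cI^*(\bb)$. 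This structural reduction underpins both the pointwise limit and the tightness bound.

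For pointwise convergence, fix $\alpha$ in the differentiability set of $\supp$, and let $\cJ_\alpha = \{\I \in \cI^*(\bb) : \langle \alpha, \bx(\I; \bb)\rangle = \supp(\alpha)\}$. Any $\I \in \cI^*(\bb) \setminus \cJ_\alpha$ has a strict positive gap from $\supp(\alpha)$ that persists under $o(1)$ perturbations of $\bb$, so for $n$ large the maximizer of $\supp_n(\alpha)$ lies in $\cJ_\alpha \cap \cI^*(\bb_n)$, yielding
\begin{equation*}
	r_n(\supp_n(\alpha) - \supp(\alpha)) = \max_{\I \in \cJ_\alpha \cap \cI^*(\bb_n)} \langle \alpha, \bx(\I; r_n(\bb_n - \bb))\rangle.
\end{equation*}
The constraint $\I \in \cI^*(\bb_n)$ reduces, on the degenerate coordinates $Z(\I) = \{i \in \I : (\bA_\I^{-1}\bb)_i = 0\}$, to a scaling-invariant sign condition on $r_n \bA_\I^{-1}(\bb_n - \bb)$, which converges to $\bA_\I^{-1}\GG \geq 0$ on $Z(\I)$. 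The continuous mapping theorem then gives a pointwise distributional limit equal to the maximum of $\langle \alpha, \bx(\I; \GG)\rangle$ over $\I \in \cJ_\alpha$ satisfying this sign condition. I then identify this limit with $\limfunc{\GG}(\alpha)$ by a polyhedral matching: setting $S_\alpha = S(\nabla \supp(\alpha))$, any $\I \in \cJ_\alpha$ satisfies $\I \supseteq S_\alpha$, so $\bx(\I;\GG)$ is a candidate vertex of the polyhedron $\{\bm{q} : \bA\bm{q} = \GG,\, \bm{q}_i \geq 0 \;\forall i \notin S_\alpha\}$; the sign condition encodes its feasibility, and reduced-cost optimality for $\I \in \cI^*(\bb)$ translates to optimality in the auxiliary LP \eqref{non_unique_lp}. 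Applying the argument jointly at finitely many $\alpha$'s yields convergence of finite-dimensional distributions.

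For tightness in $\cC(\mathbb{S}^{m-1})$, uniform boundedness follows from \cref{lipschitz_vertex}: $\|r_n(\supp_n - \supp)\|_\infty \leq C r_n \|\bb_n - \bb\| = O_P(1)$. Equicontinuity is obtained via the piecewise-linear structure: $r_n(\supp_n - \supp)$ is linear on each cell of the common refinement of the normal fans of $\bx^*(\bb_n)$ and $\bx^*(\bb)$, and a union bound over the finitely many cells of the refinement shows that for $n$ large (with high probability) the bases realizing the maxima in $\supp_n$ and $\supp$ on each cell both lie in the locally constant set $\cJ_\alpha$. Their contributions to $\supp_n - \supp$ differ only in the $\bb_n$-dependent part, collapsing to $\langle \alpha, \bx(\I; r_n(\bb_n - \bb))\rangle$, whose gradient has norm uniformly $O_P(1)$ over the finite set $\cI^*(\bb)$. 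Combining piecewise $O_P(1)$-Lipschitzness with global continuity yields $O_P(1)$-Lipschitz control of $r_n(\supp_n - \supp)$, giving tightness via Arzelà-Ascoli; Prokhorov's theorem then completes the proof. I expect the main technical obstacle to be the polyhedral identification of the pointwise limit with $\limfunc{\GG}(\alpha)$, i.e., matching the combinatorial set $\{\I \in \cJ_\alpha : \bA_\I^{-1}\GG \geq 0 \text{ on } Z(\I)\}$ with the optimal vertices of \eqref{non_unique_lp}, especially in degenerate cases where several bases correspond to the same vertex of $\bx^*(\bb)$ and may exhibit different feasibility behavior under the random perturbation.
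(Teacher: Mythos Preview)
Your overall strategy---finite-dimensional convergence plus tightness in $\cC(\mathbb{S}^{m-1})$---differs from the paper's, which instead establishes directional Hadamard differentiability of $\bb \mapsto \supp_{\bx^*(\bb)}$ as a map $\RR^k \to \cC(\mathbb{S}^{m-1})$ and then invokes R\"omisch's functional delta method. Both routes ultimately rest on the same polyhedral matching between $\{\I \in \cJ_\alpha : (\bA_\I^{-1}\bm{g})_{Z(\I)} \geq 0\}$ and the optimal bases of \eqref{non_unique_lp}, and you are right that this identification is the technical heart. Your pointwise argument is essentially the paper's computation specialized to a single $\alpha$; the difference is that the paper carries it out \emph{uniformly} over each region $K_{\bm{v}} = \{\alpha : \nabla\supp(\alpha) = \bm{v}\}$ and thereby gets the functional limit in one stroke.

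There is, however, a genuine gap in your equicontinuity step. Your justification that ``the bases realizing the maxima in $\supp_n$ and $\supp$ on each cell both lie in $\cJ_\alpha$'' rests on the pointwise strict-gap argument, but the gap $\eta(\alpha) = \supp(\alpha) - \max_{\I \notin \cJ_\alpha}\langle\alpha,\bx(\I;\bb)\rangle$ tends to zero as $\alpha$ approaches the boundary of $K_{\bm{v}}$, so the threshold on $n$ is not uniform. Near such a boundary the maximizing basis $\I_n$ for $\supp_n(\alpha)$ may satisfy $\bx(\I_n;\bb) = \bm{v}' \neq \bm{v}$, and then the gradient of $r_n(\supp_n - \supp)$ on that cell is $r_n(\bx(\I_n;\bb_n) - \bm{v}) = r_n\,\bx(\I_n;\bb_n-\bb) + r_n(\bm{v}'-\bm{v})$, whose norm is of order $r_n$, not $O_P(1)$; this would destroy your Lipschitz bound. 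The paper rules out such cells by a dual argument: it writes $\supp_n(\alpha)$ as the value of a secondary LP (maximize $\langle\alpha,\cdot\rangle$ over $\bx^*(\bb_n)$), takes an optimal dual pair $(\lambda^*,\mu^*)$, and uses complementary slackness together with the support containment $S(\bx(\I;\bb)) \subseteq S(\bx(\I;\bb_n))$ from \cref{no_disappearing} to conclude that any maximizer $\bx(\I;\bb_n)$ has $\bx(\I;\bb)$ optimal for the corresponding secondary LP at $\bb$, hence $\bx(\I;\bb) = \bm{v}$ whenever $\alpha \in K_{\bm{v}}$. The crucial point is that the threshold on $\|\bb_n - \bb\|$ here comes only from \cref{no_disappearing} and is independent of $\alpha$; this is the uniformity your outline is missing. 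A smaller issue: your continuous-mapping step assumes the map $\bm{g} \mapsto \max_{\I \in \cJ_\alpha : (\bA_\I^{-1}\bm{g})_{Z(\I)} \geq 0}\langle\alpha,\bx(\I;\bm{g})\rangle$ is continuous, which is not automatic since the index set depends on $\bm{g}$; it does follow (for fixed $\alpha$) once you observe that this function equals $t^{-1}(\supp_{\bx^*(\bb + t\bm{g})}(\alpha) - \supp(\alpha))$ for all small enough $t>0$ and invoke \cref{lipschitz_vertex}, but you should say so.
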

Informally, \cref{thm:non-unique} shows that when $n$ is large, $\supp_n \overset{d}{\approx} \supp + r_n^{-1}\limfunc{\GG}$.
By the isometry described in \cref{isometry}, this translates into a statement about the fluctuations of the random set $\bx^*(\bb_n)$ around $\bx^*(\bb)$.
The proof of Theorem~\ref{thm:non-unique} is based on establishing the directional Hadamard differentiability of the mapping $\bb \mapsto \supp_{\bx^*(\bb)}$ viewed as a function from $\RR^k$ to $\cC(\mathbb{S}^{m-1})$, and then applying a functional delta method due to \cite{romisch2004delta}.

Like Theorem~\ref{thm:unique}, Theorem~\ref{thm:non-unique} has statistical implications for the problem of obtaining a confidence set for $\bx^*(\bb)$.
The isometry \eqref{isometry} implies the following analogue of Corollary~\ref{unique_cor}.
\begin{corollary}\label{non_unique_cor}$r_n \haus(\bx^*(\bb_n), \bx^*(\bb)) \weakto \sup_{\alpha \in \mathbb{S}^{m-1}} |\limfunc{\GG}(\alpha)|$
\end{corollary}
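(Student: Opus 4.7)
The plan is to deduce this corollary from \cref{thm:non-unique} by a single application of the continuous mapping theorem, using the isometric embedding \eqref{isometry} to translate the already-established weak convergence of support functions into weak convergence of Hausdorff distances.

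First, I would recall that by \eqref{isometry} applied to $K_1 = \bx^*(\bb_n)$ and $K_2 = \bx^*(\bb)$, we have the identity
\begin{equation*}
    r_n \haus(\bx^*(\bb_n), \bx^*(\bb)) = \sup_{\alpha \in \mathbb{S}^{m-1}} \bigl| r_n (\supp_n(\alpha) - \supp(\alpha)) \bigr| = \|r_n(\supp_n - \supp)\|_\infty,
\end{equation*}
where $\|\cdot\|_\infty$ denotes the uniform norm on $\cC(\mathbb{S}^{m-1})$. Note that both $\bx^*(\bb_n)$ and $\bx^*(\bb)$ are compact and convex under \cref{Assumption} (the former almost surely, for $n$ large enough, by \cref{lipschitz_vertex}), so the isometry applies.

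Next, I would observe that the map $\Phi : \cC(\mathbb{S}^{m-1}) \to \RR$ defined by $\Phi(f) = \|f\|_\infty$ is $1$-Lipschitz (by the reverse triangle inequality) and therefore continuous as a map from the Banach space $\cC(\mathbb{S}^{m-1})$ to $\RR$. By \cref{thm:non-unique}, $r_n(\supp_n - \supp) \weakto \limfunc{\GG}$ in $\cC(\mathbb{S}^{m-1})$, so the continuous mapping theorem yields
\begin{equation*}
    r_n \haus(\bx^*(\bb_n), \bx^*(\bb)) = \Phi\bigl(r_n(\supp_n - \supp)\bigr) \weakto \Phi(\limfunc{\GG}) = \sup_{\alpha \in \mathbb{S}^{m-1}} |\limfunc{\GG}(\alpha)|,
\end{equation*}
which is the claim.

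I do not anticipate any serious obstacle: the argument is essentially a one-line reduction to \cref{thm:non-unique}. The only points that require care are (i) checking that $\bx^*(\bb_n)$ and $\bx^*(\bb)$ are compact and convex so that \eqref{isometry} applies, which is guaranteed by \cref{Assumption} and the footnote after the statement of the theorem, and (ii) ensuring that the sup-norm functional $\Phi$ is well-defined and continuous on the ambient Banach space, which is immediate since $\mathbb{S}^{m-1}$ is compact and $\Phi$ is exactly the norm on $\cC(\mathbb{S}^{m-1})$.
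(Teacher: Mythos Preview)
Your proposal is correct and matches the paper's own proof essentially line for line: both apply the continuous mapping theorem to the sup-norm functional on $\cC(\mathbb{S}^{m-1})$, invoking \cref{thm:non-unique} for the weak convergence input and \eqref{isometry} to identify the result with the Hausdorff distance. If anything, your version is slightly more careful in naming the functional as $\Phi(f)=\|f\|_\infty$ rather than $f\mapsto\sup_\alpha f(\alpha)$.
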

In other words, the rescaled Hausdorff distance between the solution sets converges in distribution to  the supremum of a random continuous function on the sphere.
Corollary~\ref{non_unique_cor} can be compared to \cite[Proposition 3.7]{klatt2022limit}, which shows that $\haus(\bx^*(\bb_n), \bx^*(\bb)) = O_P(r_n^{-1})$.
Our result gives finer control over the behavior of the rescaled distance in terms of the solutions to auxiliary linear programs.
However, unlike Corollary~\ref{unique_cor}, we are not aware of an algorithm that can compute the supremum on the right side of Corollary~\ref{non_unique_cor} in polynomial time.
Finding a computationally tractable expression for this limit is an attractive open problem.

\section{Data-driven confidence sets}\label{confidence}
\Cref{thm:unique,thm:non-unique} give explicit distributional limits for $\bx^*(\bb_n)$ in terms of auxiliary linear programs.
Though evaluating these limits is computationally tractable, they fail to be suitable for concrete inference tasks because the limiting distributions depend on properties of the true optimal solution set $\bx^*(\bb)$.
Since this set is almost always unknown in practice, Theorems~\ref{thm:unique} and~\ref{thm:non-unique} do not provide a data-driven way to obtain asymptotically valid confidence sets.

In principle, the fact that Theorem~\ref{thm:non-unique} is proven by directional Hadamard differentiability arguments implies that the $m$-out-of-$n$ bootstrap is consistent \citep{Dum93}.
However, using the bootstrap for inference raises other practical difficulties: it is an open question how to choose $m$ for good performance, and convergence is slow.
Therefore, even though Theorems~\ref{thm:unique} and~\ref{thm:non-unique} provide a complete answer to the theoretical question of obtaining a valid distributional limit, they are a poor way to construct confidence sets in practice.

In this section, we give a simple procedure to obtain such sets.
Specifically, we suppose that that statistician has solved the perturbed linear program and obtained a random solution $\hat \bx_n \in \optvert(\bb_n)$ along with a corresponding basis $\I_n \in \cI^*(\bb_n)$.\footnote{Algorithms such as the simplex method always return an optimal vertex when one exists, along with a corresponding basis \cite[Theorem 3.3]{BerTsi97}.}
We will construct a confidence set based on $\hat \bx_n$ that is guaranteed to contain at least one element of $\bx^*(\bb)$ with high probability.
Specifically, let us consider the basic solution $\bx(\I_n; \bb)$ defined by the random basis $\I_n$.
This solution may not be feasible for \eqref{primal}, much less optimal, so we define the projection
\begin{equation}\label{proj}
	\proj \defeq  \argmin_{\bm{x}\in \bm{x^*}(\bm{b})} \|\bm{x}(\I_n;\bm{b})-\bm{x}\|\,.
\end{equation}

The following result shows that we can construct a set containing this point with high probability.

\begin{theorem}\label{confidence_set}
	Suppose that \eqref{primal} satisfies \cref{Assumption} and $\bb_n$ satisfies the distributional limit \eqref{b_limit}. 
	Let $G_\alpha$ be an open set such that $\p{\GG \in G_\alpha} \geq 1 - \alpha$.
	Then
	\begin{equation}
		\liminf_{n \to \infty} \mathbb{P}\left(r_n(\hat{\bm{x}}_n - \proj)\in \bm{x}(\I_n;G_\alpha)\right) \geq 1-\alpha\,,
		\label{Confidence set}
	\end{equation}
	where $\bm{x}(\I_n;G_\alpha) \defeq \{\bm{x}(\I_n;\bm{G}): \bm{G} \in G_\alpha\}$.
\end{theorem}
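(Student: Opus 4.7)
The plan is to reduce the statement to an elementary application of the Portmanteau theorem, with \cref{no_disappearing} doing the bulk of the work. The central observation is that the random basis $\I_n$ returned by the solver is, with probability tending to $1$, also an optimal basis for the \emph{unperturbed} LP, which eliminates the projection altogether.

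First, I would note that \eqref{b_limit} implies $\bb_n \to \bb$ in probability, so that $\p{\|\bb_n - \bb\| \leq \delta} \to 1$ for the constant $\delta = \delta(\bA, \bb)$ furnished by \cref{no_disappearing}. Let $E_n \defeq \{\|\bb_n - \bb\| \leq \delta\}$. On $E_n$, \cref{no_disappearing} gives $\I_n \in \cI^*(\bb_n) \subseteq \cI^*(\bb)$, so $\bm{x}(\I_n; \bb) \in \bm{x}^*(\bb)$. In particular, the minimization defining the projection in \eqref{proj} achieves value $0$ at $\bm{x} = \bm{x}(\I_n; \bb)$, yielding $\proj = \bm{x}(\I_n; \bb)$ on $E_n$.

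Second, since $\hat \bm{x}_n$ is the basic feasible solution associated with $\I_n$, we have $\hat \bm{x}_n = \bm{x}(\I_n; \bb_n)$. The map $\bb \mapsto \bm{x}(\I_n; \bb)$ is linear, being defined by inverting $\bA_{\I_n}$ on the coordinates in $\I_n$ and padding with zeros elsewhere. Hence, on $E_n$,
\begin{equation*}
    r_n(\hat \bm{x}_n - \proj) = r_n\bigl(\bm{x}(\I_n; \bb_n) - \bm{x}(\I_n; \bb)\bigr) = \bm{x}\bigl(\I_n; r_n(\bb_n - \bb)\bigr)\,.
\end{equation*}
Consequently, the event $F_n \defeq \{r_n(\bb_n - \bb) \in G_\alpha\} \cap E_n$ is contained in the target event $\{r_n(\hat \bm{x}_n - \proj) \in \bm{x}(\I_n; G_\alpha)\}$, simply because $\bm{G} \in G_\alpha$ implies $\bm{x}(\I_n; \bm{G}) \in \bm{x}(\I_n; G_\alpha)$ by definition.

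Finally, since $G_\alpha$ is open, the Portmanteau theorem combined with \eqref{b_limit} gives
\begin{equation*}
    \liminf_{n \to \infty}\p{r_n(\bb_n - \bb) \in G_\alpha} \geq \p{\GG \in G_\alpha} \geq 1 - \alpha\,.
\end{equation*}
Together with $\p{E_n} \to 1$, this yields $\liminf_n \p{F_n} \geq 1 - \alpha$, and the conclusion \eqref{Confidence set} follows. There is no real obstacle once \cref{no_disappearing} is in hand: its role is precisely to ensure that the random basis supplied by the solver collapses the projection to the identity with probability tending to one, after which the linearity of $\bb \mapsto \bm{x}(\I_n; \bb)$ and the Portmanteau theorem close the argument. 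A minor bookkeeping point is the measurability of the random basis $\I_n$, but since the set of bases is finite one may condition on each value of $\I_n$ separately if desired.
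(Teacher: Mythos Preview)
Your proof is correct and follows essentially the same route as the paper's: both use \cref{no_disappearing} to identify $\proj$ with $\bm{x}(\I_n;\bb)$ on an event of probability tending to one, exploit the linearity of $\bb \mapsto \bm{x}(\I_n;\bb)$, and finish with the Portmanteau theorem on the open set $G_\alpha$. Your presentation is slightly more direct in that you work on the high-probability event from the outset and avoid the paper's augmented matrix $\bm{A}_n = (\bm{A};\bm{e}_{\I_n^c})$ and padded vectors $\bb^0_n$, but the underlying argument is the same.
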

\begin{corollary}[Confidence set for an optimal solution]\label{confidence_cor}
	In the setting of Theorem~\ref{confidence_set}, the set $C_n \defeq \{\hat \bx_n - r_n^{-1} \bx: \bx \in \bm{x}(\I_n;G_\alpha)\}$ contains an element of $\bx^*(\bb)$ with asymptotic probability at least $1-\alpha$.
\end{corollary}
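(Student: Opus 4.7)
The plan is to derive Corollary~\ref{confidence_cor} from Theorem~\ref{confidence_set} by a purely set-theoretic rewriting of the event appearing in \eqref{Confidence set}, combined with the defining property of $\proj$. The argument is essentially a single-line substitution, so the proposal is mainly a bookkeeping exercise rather than a new piece of analysis.

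First, I would unpack the definition of $C_n$: a point $\bm{y}\in\RR^m$ belongs to $C_n$ if and only if there exists $\bm{x}\in\bm{x}(\I_n;G_\alpha)$ with $\bm{y}=\hat{\bm{x}}_n - r_n^{-1}\bm{x}$, which is equivalent to $r_n(\hat{\bm{x}}_n-\bm{y})\in\bm{x}(\I_n;G_\alpha)$. Substituting $\bm{y}=\proj$ then gives the identity of events
\begin{equation*}
\{r_n(\hat{\bm{x}}_n-\proj)\in \bm{x}(\I_n;G_\alpha)\} = \{\proj\in C_n\}.
\end{equation*}

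Next, I would invoke the definition in \eqref{proj}: $\proj$ is, by construction, a minimizer over $\bm{x}^*(\bm{b})$, hence in particular an element of $\bm{x}^*(\bm{b})$. It follows that $\{\proj\in C_n\}\subseteq\{C_n\cap \bm{x}^*(\bm{b})\neq\emptyset\}$, and combining this inclusion with Theorem~\ref{confidence_set} yields
\begin{equation*}
\liminf_{n\to\infty}\mathbb{P}\bigl(C_n\cap \bm{x}^*(\bm{b})\neq\emptyset\bigr)\geq \liminf_{n\to\infty}\mathbb{P}\bigl(r_n(\hat{\bm{x}}_n-\proj)\in \bm{x}(\I_n;G_\alpha)\bigr)\geq 1-\alpha,
\end{equation*}
which is exactly the corollary's conclusion.

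Because the argument is entirely algebraic, I do not anticipate any substantive obstacle. The only point requiring mild care is matching the rescaling and sign conventions in the definition of $C_n$ against those of Theorem~\ref{confidence_set}, so that the equivalence of events above holds verbatim rather than up to a spurious factor of $r_n$ or a sign flip. Once that is verified, the corollary is an immediate consequence of the theorem and requires no additional probabilistic input.
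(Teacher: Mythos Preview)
Your proposal is correct and mirrors the paper's own argument essentially line for line: both observe that $r_n(\hat{\bm{x}}_n-\proj)\in\bm{x}(\I_n;G_\alpha)$ is equivalent to $\proj\in C_n$, then use $\proj\in\bm{x}^*(\bm{b})$ together with Theorem~\ref{confidence_set} to conclude.
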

\Cref{confidence_set} and Corollary~\ref{confidence_cor} are weaker than \cref{thm:non-unique,thm:unique}: they do not give any information about the whole set of optimal solutions $\bx^*(\bb)$.
Instead, Corollary~\ref{confidence_cor} only guarantees that $C_n$ contains \emph{an} optimal solution with high probability.
As our simulations in \cref{sec:examples} show, when the optimal solution is non-unique, the confidence sets constructed by this procedure sometimes cover one solution, sometimes another.
Nevertheless, \cref{confidence_cor} does offer the practitioner an asymptotic guarantee that \emph{some} optimal solution is in a neighborhood of the estimator.

On the other hand, unlike \cref{thm:non-unique,thm:unique}, Corollary~\ref{confidence_cor} is eminently practical.
It requires only the outputs $\hat \bx_n$ and $\I_n$ from a standard linear programming algorithm, and the set $\bx(\I_n: G_\alpha)$ is easy to compute, since the mapping $\bm{G} \mapsto \bx(\I_n: \bm{G})$ is an explicit linear transformation.
For instance, if $G_\alpha$ is an ellipsoid of the form $\{\bm{y} \in \RR^k: \bm{y}^\top \Sigma^{-1} \bm{y} < 1\}$, then recalling definition (\ref{basis_def}) in \cref{prelim} we have 
\begin{equation}
	\bx(\I_n; G_\alpha) = \{\bx \in \RR^m: \bx_{\I_n}^\top \bm{M}_n \bx_{\I_n}^\top < 1,\, \bx_{\I_n^c} = \bz\}\,,
\end{equation}
where $\bm{M}_n \defeq \bA_{\I_n}^\top \Sigma^{-1} \bA_{\I_n} \in \RR^{k \times k}$.

\section{Examples}\label{sec:examples}
We will provide two examples in this section to show the effectiveness of the method described in \cref{confidence_set} and Corollary~\ref{confidence_cor} for generating a confidence set for solutions to LPs.

We first return to the simple discrete optimal transport problem described in the introduction, which is a linear program with a unique degenerate optimal vertex.
We then treat a more complicated example arising from a min-cost flow problem \cite[see][section 8.1]{bradley1977applied}.
In this example, there are two optimal vertex solutions at the population level.
In both cases, our simulations confirm that the method gives confidence sets which cover an optimal solution with high probability.
\subsection{Empirical Optimal Transport}
We consider the optimal transport example given in the introduction, where we suppose that $n \bm{r}_n \sim \mathrm{Mult}(n, (1/2, 1/2))$.
This corresponds to the situation where we aim to estimate the solution to an optimal transport problem involving an unknown distribution $\bm{r} = (1/2, 1/2)$ on the basis of $n$ i.i.d.\ samples from $\bm{r}$.
In this setting, the classical central limit theorem implies $\sqrt n(\bm{r}_n - \bm{r}) \weakto (Z, -Z)$, where $Z \sim \cN(0, 1/4)$.
We therefore choose $G_\alpha = \{(x, -x): x \in [-z_{0.025}/2, z_{0.025}/2]\}$, where $[-z_{0.025}/2, z_{0.025}/2]$ is a 95\% confidence interval for an $\cN(0, 1/4)$ random variable, and use Corollary~\ref{confidence_cor} to construct a confidence set for the entries of $\pi$.

\Cref{fig:OT2X2} shows examples of the confidence intervals produced by our method.
We plot one realization for each of the labeled values of $n$.
Note that for each realization, the confidence intervals for two (random) entries of $\pi$ are singletons: for example, when $n=20$, the solution we obtained to the LP was $\pi_n=(0.5,0.05;0,0.45)$ and the confidence intervals given by Corollary~\ref{confidence_cor} were $\pi_{11}=0.5$, $\pi_{12}\in[-0.169,0.269]$, $\pi_{21}=0$ and $\pi_{22}\in[0.23,0.67]$.
Even though the confidence intervals for $\pi_{11}$  and $\pi_{21}$ have zero width, this set does in fact contain the optimal solution $(\frac{1}{2},0;0,\frac{1}{2})$.
The somewhat counterintuitive fact that a confidence set with empty interior covers the true parameter with probability approaching $95\%$ is a consequence of the fact that the distribution of $\hat \pi_n$ is \emph{not} absolutely continuous with respect to the Lebesgue measure.

\begin{figure}
    \centering
    \includegraphics[width=\textwidth]{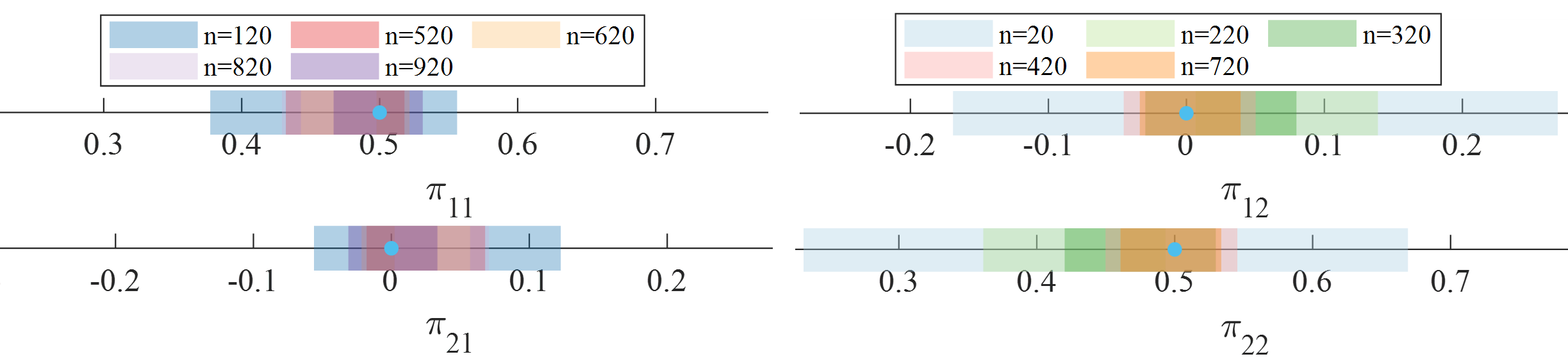}
    \caption{Example confidence intervals for $\pi=(\frac{1}{2},0;0,\frac{1}{2})$ computed with different values of $n$ (one replicate each). For the values of $n$ appearing in the box on the left, the confidence intervals for $\pi_{12}$ and $\pi_{22}$ were singletons at $0$ and $1/2$, respectively; for the values of $n$ appearing on the right, the confidence intervals for $\pi_{11}$ and $\pi_{12}$ were singletons.}
    \label{fig:OT2X2}
\end{figure}

We also estimate the observed coverage probabilities for finite $n$.
For each $n$, we generate $1000$ independent replicates, calculate the $95\%$ confidence intervals and count the replicates that successfully capture a true solution.
\begin{center}
\begin{tabular}{ |c|c|c|c|c|c|c|c|c| } 
 \hline
   n & $1$ & $3$ & $5$ & $10$ & $50$ & $100$ & $500$ & $10000$ \\ 
 \hline
 Coverage Probability &  $0.480$& $0.892$& $0.941$&$0.981$ &$0.935$&$0.922$&$0.947$&$0.950$\\ 
 \hline

\end{tabular}
\end{center}

\subsection{Minimal Cost Flow Problem}
We adapt an example from  \citet[Section 8.1]{bradley1977applied} arising in operations research.
Consider the problem of moving goods from origins to destinations along routes with certain volume constraints and costs.
We model an instance of this problem as the directed graph depicted in \cref{fig: min cost flow network}, with $5$ nodes and $9$ arcs. Each arc is unidirectional, labeled with its capacity and transportation cost (the pair of numbers in the parentheses adjacent to the arc). Each node is labeled with its supply or demand. For example, the supply of node $1$ is  $20$. The arc $x_{12}$ transports products from node $1$ to node $2$ with the maximum capacity of $15$ units of product and the cost $\$4$ per unit of product.
Assuming that the total demand matches the total supply, the goal is to fulfill all the demands in the network at a minimum cost.

\begin{figure}
    \centering
    \includegraphics[width=0.5\textwidth]{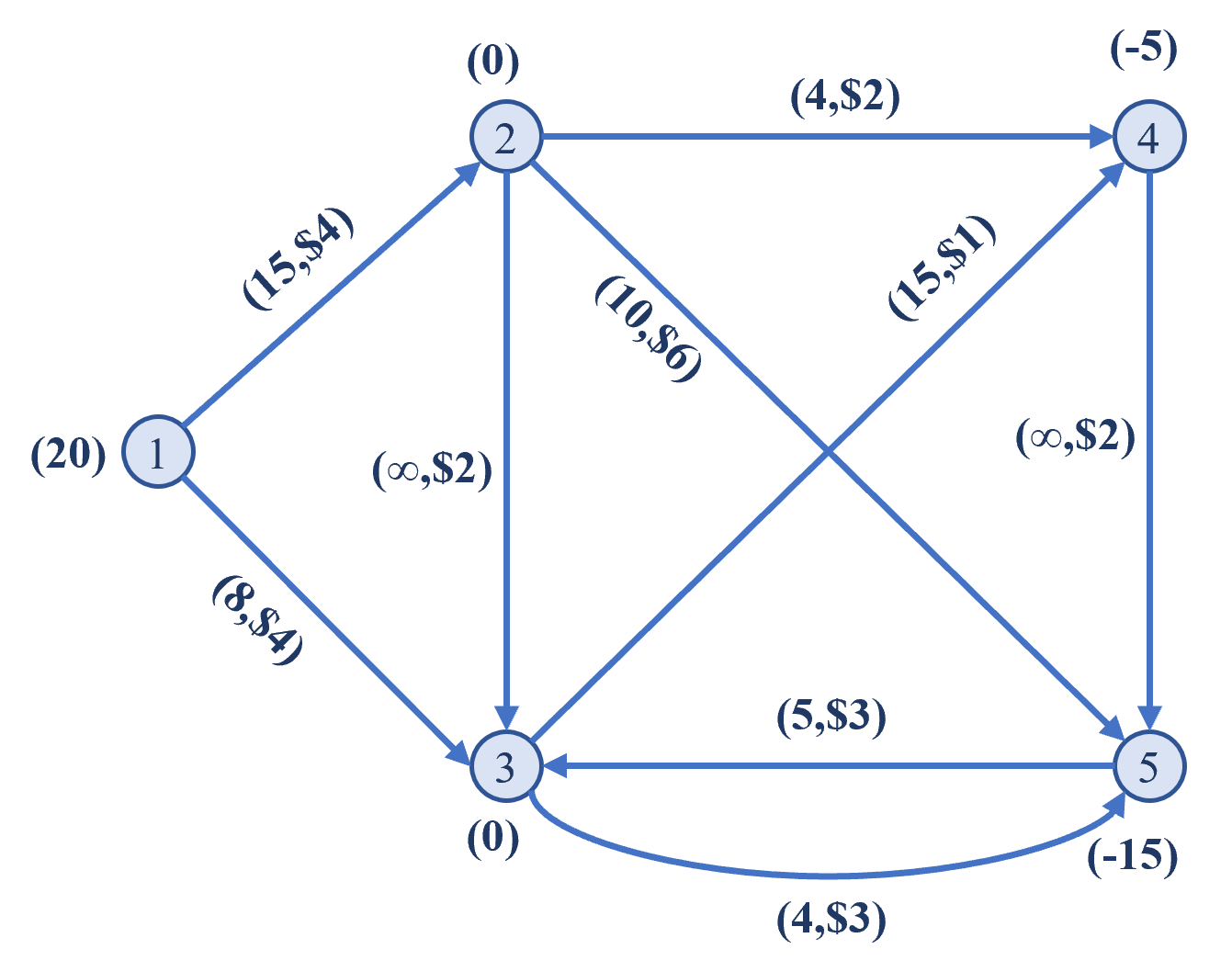}
    \caption{Minimal-cost flow problem. Each arc is labeled with its \emph{capacity} (the total amount of flow it can carry) and the cost of moving a single unit of flow across it. Vertices are labeled with supplies (positive quantities) or demands (negative quantities) for goods at each location.}
    \label{fig: min cost flow network}
\end{figure}

This minimal-cost flow problem can be written in a linear program form:
\begin{equation}
    \label{Example LP}
    \min \sum_{i,j}c_{ij}x_{ij} :\sum_j x_{ij}-\sum_k x_{ki}=b_i\ (i=1,2,...,5),\ 
    0\leq x_{ij}\leq u_{ij},
\end{equation}
where $b_i$ is the supply of each node, $u_{ij}$ is the capacity of each arc, and $ c_{ij}$ is the transportation cost of each arc. A standard linear program in the form of \cref{primal} can be obtained for this problem by introducing the auxiliary variable $y_{ij}$, which satisfies $y_{ij}+x_{ij}=u_{ij}$ and $y_{ij}\geq 0$. The auxiliary variable $y_{ij}$ represents the remaining capacity for each arc. The standard form for \cref{Example LP} is
\begin{equation}
     \min \sum_{i,j}c_{ij}x_{ij} :\sum_j x_{ij}-\sum_k x_{ki}=b_i, \quad  y_{ij}+x_{ij}=u_{ij},\ x_{ij}\geq 0,\  y_{ij}\geq 0.
\end{equation}
Note that the equality constraints $\sum_j x_{ij}-\sum_k x_{ki}=b_i$ are redundant due to the flow balance condition of the network, and deleting any one of them will not change the program. Suppose the flow balance constraint on the third node is deleted and we have the modified supply vector $\tilde{\bb}=(b_1,b_2,b_4,b_5)=(20,0,-5,-15)$.

The program in \cref{fig: min cost flow network} has two optimal vertex solutions: 
\begin{center}
\begin{tabular}{ |c|c|c|c|c|c|c|c|c|c| } 
 \hline
  & $x_{12}$ & $x_{13}$ & $x_{23}$ & $x_{24}$ & $x_{25}$ & $x_{34}$ & $x_{35}$ & $x_{45}$ & $x_{53}$ \\ 
 \hline
 solution 1 &  12& 8& 8&4 &0&15&1&14&0\\ 
 \hline
 solution 2 & 12& 8& 8&4 &0&12&4&11&0 \\ 
 \hline
\end{tabular}
\end{center}

In applications, the true supply and demand at each node may not be known precisely, but rather must be estimated by an empirical supply vector $\tilde{\bb}_n$ obtained by averaging the observed supplies and demands over $n$ days.
Suppose that we know $\sqrt{n}(\tilde{\bb}_n-\tilde{\bb})\weakto \mathbb{G}_0$, where $\mathbb{G}_0\sim\mathcal{N}\left(0,\mathrm{diag}(4,1,1,3)\right)$. 
We calculate a min-cost flow $\hat{\bx}_n$ using the estimated supply vector $\tilde{\bb}_n$, and employ Corollary~\ref{confidence_cor} to build a confidence set.

To visualize the confidence set for $\hat{\bx}_n$ for various $n$, we show the projection of $4$ dimensional confidence sets to lower dimensional spaces. As an example, we plot the confidence interval for the $x_{45}$ coordinate (\cref{fig:1Dmultiple}, \cref{fig:1D n50}) and the confidence set for the $2$ dimensional arc pair $(x_{23},x_{45})$ (\cref{fig:2D n50}).
In \cref{fig:1Dmultiple}, we show several examples of the confidence sets we obtain for $x_{45}$.
We plot a single realization for each value of $n$.
\Cref{fig:1D n50} shows many replicates for the $n = 50$ case to illustrate the sampling variability of the sets we construct, and \cref{fig:2D n50} depicts the same procedure for the two-dimensional confidence set for $(x_{23}, x_{45})$.
We can see that for each replicate, the given confidence sets capture \emph{one} of the solutions very well---which solution is covered depends on the random fluctuations in each replicate.

In short, Corollary~\ref{confidence_cor} gives a practical means of obtaining asymptotically valid confidence sets for the solution to a linear program.
To our knowledge, this is the first procedure satisfying these requirements.

\section*{Acknowledgements}
Bunea was supported in part by NSF grant DMS-2210563, and Niles-Weed was supported in part by NSF grant DMS-2210583 and a Sloan Research Fellowship. 
\begin{figure}
    \centering
    \includegraphics[width=\textwidth]{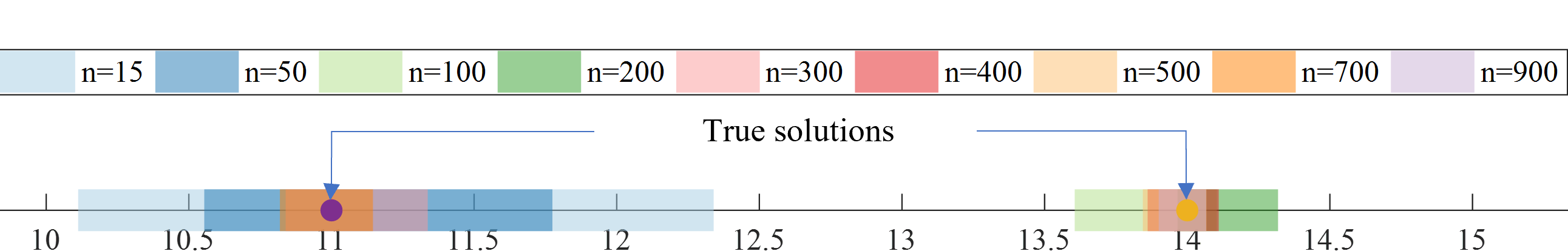}
    \caption{Example confidence intervals for flow through arc $x_{45}$ computed with different values of $n$.}
    \label{fig:1Dmultiple}
\end{figure}
\begin{figure}
    \centering
    \includegraphics[width=\textwidth]{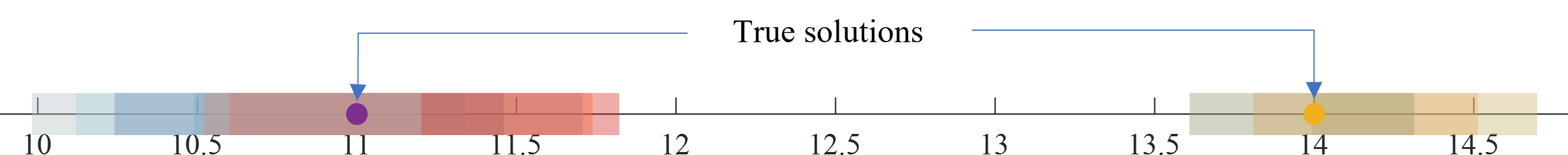}
    \caption{Confidence intervals for flow through arc $x_{45}$ when $n=50$ (many replicates).}
    \label{fig:1D n50}
\end{figure}
\begin{figure}
    \centering
    \includegraphics[width=\textwidth]{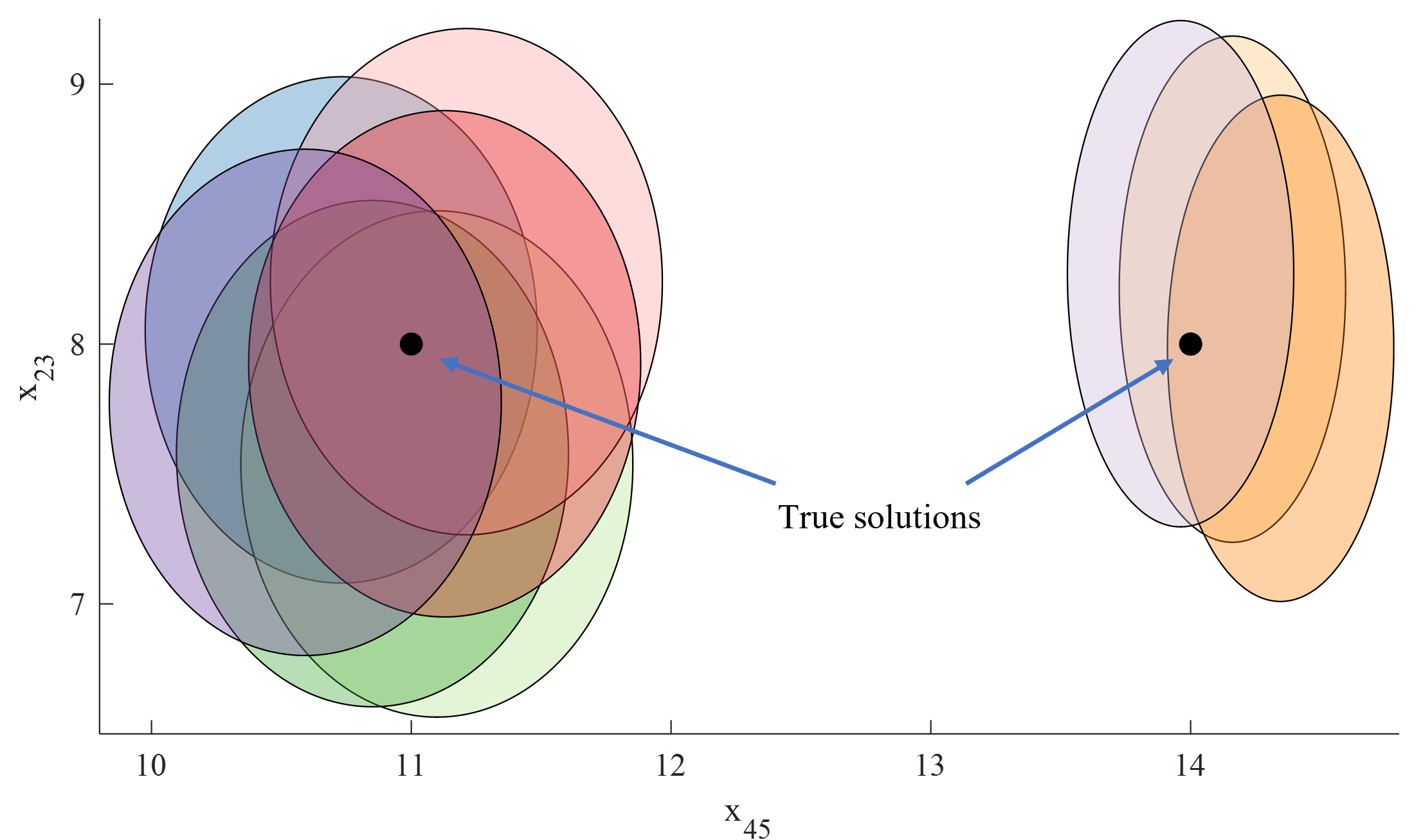}
    \caption{Confidence sets for flow through the arc pair $(x_{23},x_{45})$ when $n=50$ (many replicates).}
    \label{fig:2D n50}
\end{figure}

\appendix
 \crefalias{section}{appendix} \section{Proofs of propositions}
We first establish a few basic lemmas. In the proofs, we utilize the optimal conditions of the linear program \cref{primal} and its dual program:
\begin{equation}
    \max_{\bm{\lambda}\in \mathbb{R}^k} \langle \bm{b}, \bm{\lambda}\rangle,\qquad \textrm{s.t.}\ \bm{c}-\bm{A^T\lambda}\geq 0. 
    \label{Intro:dual}
\end{equation}
The linear program \cref{primal} and the dual program \cref{Intro:dual} achieve their optima at $(\bx^*(\bb),\ \bm{\lambda}^*(\bb))$ if and only if $\exists\  \bm{s}\in\mathbb{R}^m$ such that:
\begin{equation}
    \label{optimality condition}
    \bA^T\bm{\lambda}^*(\bb)+\bm{s}=\bc,\ \bA\bx^*(\bb)=\bb,\ \bx^*(\bb)\geq 0,\ \bm{s}\geq 0, \ \bx^*(\bb)^T\bm{s}=0. 
\end{equation}
The last condition is called \emph{complementary slackness}, and is equivalent to the condition that $ \bx^*(\bb)_i > 0 \implies \bm{s}_i = 0$ for all $i \in [m]$.
\begin{lemma}\label{continuity_lemmas}
	Under \cref{Assumption}, there exists $\delta = \delta(\bA, \bb) > 0$, $C_1 = C(\bA)$, and $C_2 = C(\bA, \bc)$  such that the following properties hold:
	\begin{enumerate}
		\item If $\|\bb' - \bb\| \leq \delta$, then $\cI(\bb') \subseteq \cI(\bb)$
		\item If $\|\bb' - \bb\| \leq \delta$, then $\bx^*(\bb') \neq \emptyset$
\item $\|\bx(I; \bb') - \bx(I; \bb)\| \leq  C_1 \|\bb' - \bb\|$, \ for all \ $I \in \cI(\bb')$.
		\item If $\val(\bb')$ is finite, then $|\val(\bb') - \val(\bb)| \leq C_2 \|\bb' - \bb\|$,
	\end{enumerate} 
\end{lemma}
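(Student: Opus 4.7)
The plan is to prove all four parts in a unified way, exploiting the finiteness of the set of bases together with LP duality. In each part, the bound reduces to a claim indexed by at most $\binom{m}{k}$ objects, so that the thresholds $\delta$, $C_1$, and $C_2$ may be taken as minima or maxima over a finite family. For Part 1 specifically, I would argue by contrapositive: if $\I \notin \cI(\bb)$ then $(\bA_\I^{-1}\bb)_j < 0$ for some coordinate $j$, and continuity of $\bb \mapsto \bA_\I^{-1}\bb$ supplies a threshold $\delta_\I > 0$ below which $(\bA_\I^{-1}\bb')_j$ remains strictly negative, hence $\I \notin \cI(\bb')$. Setting $\delta$ equal to the minimum of these $\delta_\I$ over the finitely many bases with $\I \notin \cI(\bb)$ yields $\cI(\bb') \subseteq \cI(\bb)$.

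For Part 2, I would verify both primal and dual feasibility of the perturbed program. Primal feasibility follows from Slater's condition: since $\bA$ has full row rank, I can choose $\bx_0'$ satisfying $\bA \bx_0' = \bb'$ within distance $O(\|\bb' - \bb\|)$ of the Slater point $\bx_0 > 0$ (for instance via $\bx_0' = \bx_0 + \bA^T(\bA \bA^T)^{-1}(\bb' - \bb)$), so positivity is preserved after shrinking $\delta$. Dual feasibility is automatic because the dual feasible set $\{\bm{\lambda} : \bA^T \bm{\lambda} \leq \bc\}$ does not depend on $\bb$, and it is nonempty by strong duality applied to the original (feasible) program. Strong duality applied to the perturbed program then concludes that $\bx^*(\bb') \neq \emptyset$.

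Part 3 is a direct calculation. For $\I \in \cI(\bb') \subseteq \cI(\bb)$ (invoking Part 1), the definition of $\bx(\I; \cdot)$ in \eqref{basis_def} yields $\bx(\I; \bb') - \bx(\I; \bb) = \bA_\I^{-1}(\bb' - \bb)$ on coordinates in $\I$ and zero elsewhere. Hence $\|\bx(\I; \bb') - \bx(\I; \bb)\| \leq \op{\bA_\I^{-1}} \|\bb' - \bb\|$, and taking $C_1 = \max_{\I \in \cI(\bb)} \op{\bA_\I^{-1}}$ over the finite collection of feasible bases suffices.

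For Part 4, I would invoke LP duality more substantially. The dual polyhedron $\{\bm{\lambda} : \bA^T \bm{\lambda} \leq \bc\}$ is independent of $\bb$, and since $\bA$ has full row rank it admits vertices; because the dual objective is bounded above (by primal feasibility and weak duality), the fundamental theorem of linear programming guarantees that the dual optimum is attained at one of these finitely many vertices. Setting $C_2$ equal to the maximum Euclidean norm over such vertices, if $\bm{\lambda}^*(\bb)$ and $\bm{\lambda}^*(\bb')$ are optimal dual solutions, weak duality gives $\val(\bb') \geq \langle \bb', \bm{\lambda}^*(\bb) \rangle = \val(\bb) + \langle \bb' - \bb, \bm{\lambda}^*(\bb) \rangle \geq \val(\bb) - C_2 \|\bb' - \bb\|$, and the reverse inequality is symmetric. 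The main obstacle is producing this uniform constant $C_2$ independent of $\bb'$: it hinges on the dual-vertex argument, which in turn uses both the full-rank assumption on $\bA$ and the boundedness of $\bx^*(\bb)$ to guarantee primal/dual attainment.
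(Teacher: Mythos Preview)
Your proposal is correct and follows essentially the same route as the paper: contrapositive over the finitely many bases for Part~1, Slater plus the $\bb$-independence of the dual feasible set for Part~2, a direct operator-norm estimate for Part~3, and the duality sandwich $\langle \bb'-\bb,\bm{\lambda}^*(\bb)\rangle \le \val(\bb')-\val(\bb)\le \langle \bb'-\bb,\bm{\lambda}^*(\bb')\rangle$ with $C_2$ the maximum norm over dual vertices for Part~4. One small fix in Part~3: the lemma asks for $C_1=C(\bA)$ and imposes no $\delta$-restriction on $\bb'$, so take $C_1=\max_{\I:\,\bA_\I\text{ invertible}}\op{\bA_\I^{-1}}$ over \emph{all} bases (as the paper does) rather than over $\cI(\bb)$; this also makes your appeal to Part~1 unnecessary.
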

\begin{proof}
The perturbed LP with linear constraint $\bA\bx=\bb'$ reads:
 \begin{equation}
	\min_{\bm{x}\in\mathbb{R}^m} \langle\bm{c},\bm{x}\rangle,\qquad \text{s.t.}\ \bm{Ax}=\bm{b'},\ \bm{x}\geq\bm{0},
 \label{Intro:total_p}
\end{equation}
\paragraph{1. Inclusion of feasible bases: $\cI(\bb') \subseteq \cI(\bb)$}
If $\exists \I_0\in \mathcal{I}(\bm{b}')\backslash\mathcal{I}(\bm{b})$, there exist $1\leq p\leq k$ such that $ (\bm{A}_{\I_0}^{-1}\bm{b}')_p\geq 0$ and  $ (\bm{A}_{\I_0}^{-1}\bm{b})_p<0$. 
However, when $\|\bm{b}'-\bm{b}\|<\frac{|(\bm{A}_{I_0}^{-1}\bm{b})_p|}{\|\bA^{-1}_{I_0}\|}$, $$|(\bm{A}_{I_0}^{-1}\bm{b}')_p-(\bm{A}_{I_0}^{-1}\bm{b})_p|\leq\|\bm{A}_{I_0}^{-1}\bm{b}'-\bm{A}_{I_0}^{-1}\bm{b}\|<|(\bm{A}_{I_0}^{-1}\bm{b})_p|.$$ Therefore, $(\bm{A}_{I_0}^{-1}\bm{b'})_p<0$. Take $\delta_{b_0}=\min_{\{I_0|\text{$\bA_{I_0}$ invertible}\}} \min_{p: (\bA_{I_0}^{-1}\bb)_p < 0} \frac{|(\bm{A}_{I_0}^{-1}\bm{b})_p|}{\|\bA_{I_0}^{-1}\|}$. When $\|\bm{b}'-\bm{b}\|<\delta_{b_0}$, there is no such basis $\I_0$ and  $\mathcal{I}(\bm{b}')\subseteq\mathcal{I}(\bm{b})$. 

\paragraph{2. Existence of optimal solution $\bx^*(\bb')$.}
We first show that the perturbed LP is feasible.

By \cref{Assumption}, there exists $\bx_0$ satisfying
\begin{equation*}
	\bm{A}\bm{x}_0=\bm{b},\qquad \bm{x}_0>0\,.
\end{equation*}
Let $s_{\bm{x}_0}$ be the smallest entry of $\bm{x}_0$ and let $I_0$ be an arbitrary element of $\mathcal{I}(\bm{b})$.
When $\|\bm{b}'-\bm{b}\|<\frac{s_{\bm{x}_0}}{\|\bm{A}_{I_0}^{-1}\|}\coloneqq \delta_{b_1}$, we have $$\|\bm{x}(I_0;\bm{b}'-\bm{b})\|=\|\bm{A}_{I_0}^{-1}(\bm{b}'-\bm{b})\|<s_{\bm{x}_0}.$$
Then $\bm{x}_0'\coloneqq\bm{x}_0+\bm{x}(I_0;\bm{b}'-\bm{b})$ satisfies $\bm{A}\bm{x}_0'=\bm{b}'$ and $\bm{x}_0'>0$, which indicates that $\bm{x}_0'$ lies in the feasible region of \cref{Intro:total_p}.

The dual problem of \cref{Intro:total_p} is
\begin{equation}
	\max_{\bm{\lambda}\in \mathbb{R}^k} \langle \bm{b}', \bm{\lambda}\rangle,\qquad \textrm{s.t.}\ \bm{c}-\bm{A^T\lambda}\geq 0. 
	\label{Intro:dual_p}
\end{equation}
The fact that $\bx^*(\bb)$ is nonempty implies that \cref{Intro:dual_p} is feasible, since the feasible set of \cref{Intro:dual_p} does not depend on $\bb'$.
Hence the value of \cref{Intro:total_p} is bounded and there exist optimal solutions.

\paragraph{3. Lipschitz continuity of basic feasible solutions.}
We argue as in Part 1.
For any $\I \in \cI(\bb')$, $\bx(\I; \bb')_{\I^C} = \bx(\I; \bb)_{\I^C} = \bz$, and
\begin{equation*}
	\|(\bm{A}_{\I}^{-1}\bm{b}')-(\bm{A}_{\I}^{-1}\bm{b})\|=\|\bm{A}_{\I}^{-1}\bm{b}'-\bm{A}_{\I}^{-1}\bm{b}\| \leq \|\bA_\I^{-1}\| \|\bb' - \bb\|\,.
\end{equation*}
Taking $C_1 = \max_{\I: \bA_\I \text{ invertible}} \|\bA_\I^{-1}\|$ yields the bound.

\paragraph{4. Local Lipschitz continuity of optimal value}
The fact that target and perturbed primal problems have finite values indicates that there exist optimal solutions to the target and perturbed dual problems.
Denote optimal vertex solutions to the dual programs by  $\bm{\lambda}^*(\bm{b})$ and $\bm{\lambda}^*(\bm{b}')$, respectively
Strong duality implies
$$\langle \bm{b}', \bm{\lambda}^*(\bm{b}') \rangle = f(\bm{b}')= \langle \bm{c}, \bm{x}^*(\bm{b}') \rangle.$$ Therefore, we have$$\langle \bm{b}', \bm{\lambda}^*(\bm{b}') \rangle-\langle \bm{b}, \bm{\lambda}^*(\bm{b}) \rangle= \langle \bm{c}, \bm{x}^*(\bm{b}')-\bm{x}^*(\bm{b}) \rangle=f(\bm{b}')-f(\bm{b}). $$ Since $\bm{\lambda}^*(\bm{b})$ and $\bm{\lambda}^*(\bm{b}')$ are optimal vertices of \cref{Intro:dual} and \cref{Intro:dual_p}, respectively, we obtain
$$\langle \bm{b}'-\bm{b}, \bm{\lambda}^*(\bm{b}) \rangle\leq(f(\bm{b}')-f(\bm{b}))\leq \langle \bm{b}'-\bm{b}, \bm{\lambda}^*(\bm{b}') \rangle. $$
Therefore
$$|f(\bm{b}')-f(\bm{b})|\leq \|\bm{b}'-\bm{b}\|\max_{\bm{\lambda}\in \bm{\Lambda}} \|\bm{\lambda}\|,$$ where $\bm{\Lambda}$ is the set of all vertices of the polytope $\bA^\top \lambda \geq \bc$.
    
\end{proof}
We now turn to proofs of the propositions.
\begin{proof}[Proof of Proposition~\ref{lipschitz_vertex}]
	We follow the same argument as is given in the proof of Proposition 3.7 in \cite{klatt2022limit}.
	If $\bx^*(\bb_1)$ and $\bx^*(\bb_2)$ are both nonempty, then Lemma~\ref{continuity_lemmas}, part 4, implies that $\|\val(\bb_1) - \val(\bb_2)\| \leq C_2\|\bb_1 - \bb_2\|$.
	We then apply the main theorem of \cite{walkup1969lipschitzian} with $K$ being the positive orthant and $\tau(\bx) = (\bA \bx, \langle \bc, \bx \rangle)$.
\end{proof}

\begin{proof}[Proof of Proposition~\ref{no_disappearing}]
	Let $\delta$ be small enough that Lemma~\ref{continuity_lemmas} holds.
   Parts 1 and 2 of that lemma imply that $\emptyset \neq \cI^*(\bb') \subseteq \cI(\bb') \subseteq \cI(\bb)$.
   It therefore suffices to show that $\I_0 \in \cI^*(\bb)$ for all $\I_0 \in \cI^*(\bb')$.
   
   Assume that $\bx(\I_0; \bb') \in \bx^*(\bb')$.
   Denote by $\lambda_{\I_0}$ an optimal dual solution to \cref{Intro:dual_p}, which satisfies
   $$\bA^T\bm{\lambda}_{I_0}+\bm{s}=\bc,\ \bA\bx(I_0;\bb')=\bb',\ \bx(I_0;\bb')\geq 0,\ \bm{s}\geq 0, \ \bx(I_0;\bb')^T\bm{s}=0$$
for some $\bm{s} \in \RR^m$.
We will now show that $(\bx(\I_0; \bb), \lambda_{\I_0})$ is also an optimal primal-dual pair for the unperturbed program when $\delta$ is small enough.

The first four conditions still hold for  $(\bx(I_0;\bb),\bm{\lambda}_{I_0})$:
    $$\bA^T\bm{\lambda}_{I_0}+\bm{s}=\bc,\ \bA\bx(I_0;\bb)=\bb,\ \bx(I_0;\bb)\geq 0,\ \bm{s}\geq 0.$$ 
    To show the complementary slackness condition holds, we use Part 3 of Lemma~\ref{continuity_lemmas}, since $S(\bx(I_0;\bb))\subseteq S(\bx(I_0;\bb'))$ as long as $\|\bx(I_0;\bb) - \bx(I_0;\bb')\| < \tau(\bA, \bb)$, where $$\tau(\bA, \bb) := {\max_{\I: \I \in \cI(\bb)} \min_{i\in S(\bx(\I; \bb))} \bx(\I; \bb)_i} > 0.$$
    By Part 3 of Lemma~\ref{continuity_lemmas}, we can choose $\delta'(\bA, \bb) > 0$ small enough that $\|\bx(I_0;\bb) - \bx(I_0;\bb')\| < \tau(\bA,\bb)$ whenever $\|\bb - \bb'\| \leq \delta'$.

We obtain that if $\|\bb' - \bb'\| \leq \delta^*(\bA, \bb) =: \delta \wedge \delta'$, then $\mathcal{I}^*(\bb')\subseteq\mathcal{I}^*(\bb)$, as desired.

\end{proof}

\section{Proofs of main theorems}
This section contains the proofs of our main results.
We first show how to derive \cref{thm:non-unique} and Corollary~\ref{non_unique_cor} (\cref{non-unique-proofs}).
We then obtain \cref{thm:unique} and Corollary~\ref{unique_cor} as easy consequences (\cref{unique-proofs}).
Finally, we give the elementary proofs of \cref{confidence_set} and Corollary~\ref{confidence_cor} in \cref{confidence_proofs}.
\subsection{Proofs for \cref{non-unique}}\label{non-unique-proofs}

Our proof is based on the Hadamard differentiability properties of the mapping $H: \RR^k \to \cC(\mathbb S^{m-1})$ which sends a vector $\bb$ to the support function $\supp_{\bx^*(\bb)}$.
Specifically, we will show the following:
\begin{theorem}\label{hadamard}
	The mapping $H: \RR^k \to \cC(\mathbb S^{m-1})$ is directionally Hadamard differentiable, with derivative $\limfunc{\cdot}$, where $\limfunc{}$ is as in the statement of \cref{thm:non-unique}.
	That is,
\begin{equation}
	\lim_{t_n \searrow 0, \xi_n \to \xi} \frac{H(\bb + t_n \xi_n) - H(\bb)}{t_n} = \limfunc{\xi}\,,
\end{equation}
in $\cC(\mathbb S^{m-1})$.
\end{theorem}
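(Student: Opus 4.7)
The plan is to reduce directional Hadamard differentiability of $H$ to directional Gâteaux differentiability using the global Lipschitz property of $H$. By the isometry \eqref{isometry} and \cref{lipschitz_vertex},
\begin{equation*}
\|H(\bb_1) - H(\bb_2)\|_\infty = \haus(\bx^*(\bb_1), \bx^*(\bb_2)) \leq C \|\bb_1 - \bb_2\|,
\end{equation*}
so $H$ is $C$-Lipschitz from $\RR^k$ into $\cC(\mathbb{S}^{m-1})$. On a finite-dimensional domain, a Lipschitz map is directionally Hadamard differentiable whenever it is directionally Gâteaux differentiable, since $\|(H(\bb + t_n \xi_n) - H(\bb))/t_n - (H(\bb + t_n \xi) - H(\bb))/t_n\|_\infty \leq C \|\xi_n - \xi\| \to 0$. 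It therefore suffices to show that for each fixed $\xi$, $t^{-1}(H(\bb + t\xi) - H(\bb)) \to \limfunc{\xi}$ uniformly on $\mathbb{S}^{m-1}$ as $t \searrow 0$.

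I plan to establish this first pointwise at every $\alpha$ where $\supp$ is differentiable (a dense subset of $\mathbb{S}^{m-1}$). Fix such an $\alpha$, let $\bx^\alpha := \nabla \supp(\alpha)$ be the unique $\alpha$-maximizer of $\bx^*(\bb)$, and set $\bb_t := \bb + t\xi$. By \cref{no_disappearing}, $\cI^*(\bb_t) \subseteq \cI^*(\bb)$ for small $t$. The identity $\bx(\I; \bb_t) = \bx(\I; \bb) + t \bx(\I; \xi)$, combined with the strict positive gap $\supp(\alpha) - \langle \alpha, \bx(\I; \bb) \rangle > 0$ for $\I \in \cI^*(\bb)$ with $\bx(\I; \bb) \neq \bx^\alpha$, implies that for small $t$ the maximum defining $\supp_{\bx^*(\bb_t)}(\alpha)$ is attained at a vertex $\bx_t$ close to $\bx^\alpha$. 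Writing $\bx_t = \bx^\alpha + t \bp_t$, feasibility yields $\bA \bp_t = \xi$ and $(\bp_t)_i \geq 0$ for $i \notin S(\bx^\alpha)$---precisely the constraints of the auxiliary LP~\eqref{non_unique_lp}. The optimality condition $\bc^T \bx_t = f(\bb_t)$, combined with the local Lipschitz continuity of $f$ (\cref{continuity_lemmas}(4)) and the duality-based identity $f'(\bb; \xi) = v_{\mathrm{aux}}(\xi)$ (where $v_{\mathrm{aux}}(\xi)$ is the auxiliary LP optimum), forces $\bc^T \bp_t \to v_{\mathrm{aux}}(\xi)$. Any subsequential limit $\bp^*$ is thus an auxiliary-LP optimum, and the $\alpha$-maximizing one realizes $\limfunc{\xi}(\alpha)$.

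To upgrade to uniform convergence on $\mathbb{S}^{m-1}$, I use equicontinuity: each $\supp_{\bx^*(\bb')}$ is Lipschitz in $\alpha$ with constant bounded by $\sup\{\|\bx\| : \bx \in \bx^*(\bb')\}$, which stays uniformly bounded for $\bb'$ near $\bb$ by \cref{lipschitz_vertex}. The family $\{t^{-1}(\supp_{\bx^*(\bb_t)} - \supp)\}_{t > 0}$ is therefore uniformly Lipschitz on the sphere; Arzelà--Ascoli yields relative compactness in $\cC(\mathbb{S}^{m-1})$, and pointwise convergence on the dense set of differentiability points of $\supp$ pins down the unique uniform limit as $\limfunc{\xi}$ (itself Lipschitz, hence continuous).

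The main obstacle is the linearization step in the pointwise analysis, specifically establishing $\bc^T \bp_t \to v_{\mathrm{aux}}(\xi)$ and extracting a subsequential limit in $\mathbf{q}^*_\alpha(\xi)$ achieving $\sup_{\bp \in \mathbf{q}^*_\alpha(\xi)} \alpha^T \bp$. This requires the directional derivative formula $f'(\bb; \xi) = v_{\mathrm{aux}}(\xi)$---which follows by LP duality since the optimal dual set of the original LP coincides with the aux-dual feasible set by complementary slackness with $\bx^\alpha$---together with a finite-combinatorial argument over aux-LP optimal bases to isolate the $\alpha$-maximizing vertex. Once this identification is in place, the Lipschitz-to-Hadamard reduction and equicontinuity handle the remainder of the argument.
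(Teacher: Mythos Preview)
Your Lipschitz-to-Hadamard reduction is correct and matches the paper's first step. Your pointwise analysis is also on the right track. However, the ``strict positive gap'' you invoke to localize the $\alpha$-maximizer of $\bx^*(\bb_t)$ near $\bx^\alpha$ produces a threshold on $t$ that depends on $\alpha$: the gap $\supp(\alpha)-\langle\alpha,\bx(\I;\bb)\rangle$ shrinks to zero as $\alpha$ approaches the boundary of the cell $K_{\bm v}=\{\alpha:\nabla\supp(\alpha)=\bm v\}$. This is precisely why you need an equicontinuity step to upgrade to uniform convergence---and that step contains a genuine gap.

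Your argument for equicontinuity does not work. From the premise that each $\supp_{\bx^*(\bb')}$ is $L$-Lipschitz in $\alpha$ with $L$ uniform in $\bb'$, you only get that $\supp_{\bx^*(\bb_t)}-\supp$ is $2L$-Lipschitz; after dividing by $t$ the Lipschitz constant is $2L/t$, which blows up as $t\searrow 0$. So the family $\{t^{-1}(\supp_{\bx^*(\bb_t)}-\supp)\}_{t>0}$ is \emph{not} shown to be uniformly Lipschitz, Arzel\`a--Ascoli does not apply, and pointwise convergence on a dense set cannot be upgraded. I do not see a direct repair: any bound of the form $|D_t(\alpha)-D_t(\beta)|\le C\|\alpha-\beta\|$ with $C$ independent of $t$ seems to require already knowing that the $\alpha$-maximizer of $\bx^*(\bb_t)$ stays near the $\alpha$-maximizer of $\bx^*(\bb)$, which is exactly what your $\alpha$-dependent threshold fails to give uniformly.

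The paper avoids equicontinuity altogether. Rather than the strict-gap argument, it fixes $\bm v\in\optvert(\bb)$ and uses complementary slackness for the dual of the linear program $\max\{\langle\alpha,\bx\rangle:\bx\in\bx^*(\bb_t)\}$ to show that any optimal basis $\I\in\cI^*(\bb_t)$ realizing this maximum must satisfy $\bx(\I;\bb)=\bm v$, with a threshold on $t$ depending only on $\bm v$ (via \cref{no_disappearing}) and not on $\alpha\in K_{\bm v}$. It then matches the relevant bases bijectively with the basic feasible solutions of the auxiliary LP, obtaining the \emph{exact equality} $t^{-1}(\supp_t(\alpha)-\supp(\alpha))=\limfunc{\xi}(\alpha)$ for all $\alpha\in K_{\bm v}$ once $t$ is below this $\bm v$-dependent threshold. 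Since there are finitely many $\bm v$, one threshold works for all cells simultaneously; the $K_{\bm v}$ cover a dense subset of the sphere, and the difference quotients are continuous, so the equality extends to all of $\mathbb S^{m-1}$. No compactness argument is needed.
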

\Cref{thm:non-unique} then follows directly from \cite{romisch2004delta}.

\begin{proof}[Proof of \cref{hadamard}]
First, Proposition~\ref{lipschitz_vertex} and \cref{isometry} imply that if $t_n$ is sufficiently small and $\xi_n$ is sufficiently close to $\xi$, then $$\|\supp_{\bx^*(\bb + t_n \xi_n)} - \supp_{\bx^*(\bb + t_n \xi)}\|_{L^\infty} = \haus(\bx^*(\bb + t_n \xi_n),\bx^*(\bb + t_n \xi)) \lesssim t_n \|\xi_n - \xi\|\,.$$
Therefore
\begin{equation}
	\lim_{t_n \to 0, \xi_n \to \xi} \frac{\|H(\bb + t_n \xi_n) - H(\bb + t_n \xi)\|_{L^\infty}}{t_n} \lesssim 	\lim_{\xi_n \to \xi} \|\xi_n - \xi\| = 0\,,  
\end{equation}
so that
\begin{equation}
	\lim_{t_n \searrow 0, \xi_n \to \xi} \frac{H(\bb + t_n \xi_n) - H(\bb + t_n \xi)}{t_n} = 0
\end{equation}
in $\cC(\mathbb S^{m-1})$.

It therefore suffices to show that 
\begin{equation}
	\lim_{t_n \searrow 0} \frac{H(\bb + t_n \xi) - H(\bb)}{t_n} = \limfunc{\xi}\,.
\end{equation}
The function $\supp(\alpha)$ is differentiable whenever $\sup_{\bx \in \optvert(\bb)} \langle \alpha, \bx \rangle$ is uniquely achieved, and the gradient is precisely the vertex giving the supremum.
For a vertex $\bm{v} \in \optvert(\bb)$ we write $K_\bm{v}$ for the subset of $\mathbb{S}^{m-1}$ consisting of all $\alpha$ for which $\supp$ is differentiable at $\alpha$, with derivative $\bm{v}$.
The collection $\{K_{\bm{v}}: \bm{v} \in \optvert(\bb)\}$ forms a finite disjoint partition of the sphere up to a measure zero set.
We shall show that $H(\bb + t_n \xi)$ converges uniformly to $H(\bb)$ on each element of this partition, which establishes almost everywhere uniform convergence and the desired limit.

In what follows, we therefore fix a $\bm{v} \in \optvert(\bb)$ and consider the functions $H(\bb + t_n \xi)$ and $H(\bb)$ on $K_\bm{v}$.
By assumption, $\sup_{\bx \in \optvert(\bb)} \langle \alpha, \bx \rangle$ is uniquely attained at $\bm{v}$ for all $\alpha$ in this set.
We will now show that for all $\alpha \in K_{\bm{v}}$ and $t_n$ smaller than a constant that depends on $\bm{v}$ but not on $\alpha$, we may restrict the supremum in $\sup_{\bx \in \bx^*(\bb + t_n \xi)} \langle \alpha, \bx \rangle$ to vectors of the form $\bx(\I; \bb + t_n \xi)$ where $\I \in \cI^*(\bb + t_n \xi)$ and $\bx(\I; \bb) = \bm{v}$.

The optimal set $\bx \in \bx^*(\bb + t_n \xi)$ is the set of nonnegative vectors in $\RR^m$ that satisfy the linear constraint $\bA \bx = \bb + t_n \xi$ and that achieve the value  $\langle \bc, \bx \rangle = \val(\bb + t_n \xi)$.
Therefore $\sup_{\bx \in \bx^*(\bb + t_n \xi)} \langle \alpha, \bx \rangle$ is equivalent to the linear program
\begin{equation}\label{supp_lp_bn}
	\max \langle \alpha, \bm{x} \rangle : \bA \bm{x} = \bb+t_n\xi, \langle \bc, \bx \rangle = \val(\bb + t_n \xi), \bx \geq 0\,.
\end{equation}
Analogously, we have by assumption that $\bm{v}$ is the unique solution to
\begin{equation}\label{supp_lp}
	\max \langle \alpha, \bm{x} \rangle : \bA \bm{x} = \bb, \langle \bc, \bx \rangle = \val(\bb), \bx \geq 0\,.
\end{equation}
Since $\bx^*(\bb + t_n \xi)$ is compact, \cref{supp_lp_bn} has an optimal solution, and therefore so does its dual problem:
\begin{equation}\label{supp_dual}
	\min \langle \lambda, \bb + t_n \xi \rangle + \mu \val(\bb + t_n \xi): \bA^\top \lambda + \mu \bc \geq \alpha\,,
\end{equation}
where $\lambda \in \RR^{k}$ and $\mu \in \RR$.
Denote by $\lambda^*$ and $\mu^*$ arbitrary optimal solutions to this problem.
Complementary slackness implies that any optimal solution $\bx^*_n$ to \cref{supp_lp_bn} satisfies
\begin{equation}\label{cs}
	i \in S(\bx^*_n) \implies (\bA^\top \lambda^*)_i+ \mu ^*\bc_i = \alpha_i\,.
\end{equation}

We can always assume that $\sup_{\bx \in \bx^*(\bb + t_n \xi)} \langle \alpha, \bx \rangle$ is achieved at an extreme point, and so is given by some basic feasible solution $\bx(\I; \bb + t_n \xi)$ for $\I \in \cI^*(\bb + t_n \xi)$.
So it suffices to show that if such an $\I$ gives rise to an optimal solution to \cref{supp_lp_bn}, then $\bx(\I; \bb) = \bm{v}$.
By \cref{cs}, 
\begin{equation}\label{cs'}
	(\bA^\top \lambda^*)_i+ \mu ^*\bc_i = \alpha_i \quad \forall i \in S(\bx(\I; \bb + t_n \xi))\,.
\end{equation}
By Proposition~\ref{no_disappearing}, for $t_n$ small enough (independent of $\alpha$), the fact that $\I \in \cI^*(\bb + t_n \xi)$ implies $\I \in \cI^*(\bb)$ and $S(\bx(\I; \bb)) \subseteq S(\bx(\I; \bb+ t_n \xi))$.
Combining this fact with \cref{cs'} gives that 
\begin{equation}\label{cs''}
	\langle \bA^\top \lambda^*+ \mu ^*\bc - \alpha,  \bx(\I; \bb) \rangle = 0\,.
\end{equation}
But this implies that $\bx(\I; \bb)$ must be optimal for \cref{supp_lp} by weak duality.
To see this explicitly, we first observe that $\I \in \cI^*(\bb)$ shows that $\bx(\I; \bb)$ is feasible in \cref{supp_lp}.
Second, $\lambda^*$ and $\mu^*$ are feasible for \cref{supp_dual}.
Therefore, if $\bx$ is any feasible point for \cref{supp_dual}, we have
\begin{align*}
	\langle \alpha, \bx(\I; \bb) - \bx \rangle & = \langle \bA^\top \lambda + \mu \bc, \bx(\I; \bb) - \bx \rangle + \langle \bA^\top \lambda + \mu \bc - \alpha, \bx -\bx(\I; \bb) \rangle \\
	& \geq 0\,,
\end{align*}
where we have used that $\langle \bA^\top \lambda + \mu \bc, \bx(\I; \bb) - \bx \rangle = 0$ since $\bA \bx(\I; \bb) = \bA \bx$ and $\langle \bc, \bx(\I; \bb) \rangle = \langle \bc, \bx \rangle$ and the second term is nonnegative in light of \cref{cs''} and the fact that $\bA^\top \lambda + \mu \bc - \alpha$ and $\bx$ are both nonnegative.
Therefore $\bx(\I;\bb)$ is optimal for \cref{supp_lp}, so we must have $\bx(\I;\bb) = \bm{v}$, which was what we wanted to show.

For $\alpha \in K_{\bm{v}}$, we therefore have that for $t_n$ sufficiently small, depending only on $\bm{v}$,
\begin{equation}\label{find_bases}
	\sup_{\bm{x} \in \optvert(\bb + t_n \xi)} \langle \alpha, \bm{x} \rangle - 	\sup_{\bm{x} \in \optvert(\bb)} \langle \alpha, \bm{x} \rangle = \max_{\I \in \cI^*(\bb + t_n \xi): \bx(\I; \bb) = \bm{v}} \langle \alpha, \bx(\I; \bb + t_n \xi) - \bm{v} \rangle\,.
\end{equation}

	Consider now the linear program appearing in the definition of $\bm{q}_\alpha^*(\xi)$:
	\begin{equation}\label{auxiliary}
		\min \langle \bc, \bm{q} \rangle : \bA \bm{q} = \xi, \bm{q}_i \geq 0 \quad \forall i \notin S(\bm{v})\,.
	\end{equation}
	Note that this program does not depend on $\alpha$, only on $\bm{v}$.
	We wish to show that for $t_n$ small enough, the basic feasible solutions of this program are exactly the vectors of the form $t_n^{-1}(\bx(\I; \bb + t_n \xi) - \bm{v})$ for $\I \in \cI(\bb + t_n \xi)$ such that $\bx(\I;  \bb) = \bm{v}$.
	A basic feasible solution corresponds to a selection of $m$ linearly independent constraints: $k$ that arise from the equality constraints, and $m-k$ tight inequality constraints selected from the set $S(\bm{v})^C$.
	
	Fix a basis for this linear program, denote by $\bar J$ the set of equality constraints selected from the set $S(\bm{v})^C$, and let $J = \bar J^C$.
	The fact that $\bar J \subseteq S(\bm{v})^C$ implies $S(\bm{v}) \subseteq J$.
	Since these give rise to a basis, the system of equations given by $\bA \bm{q} = \xi$ and $\bm{q}_j = 0$ for $j \notin J$ has a unique solution.
	Equivalently, the set $J$ satisfies that $\bA_{J} \bm{q}_J = \xi$ has a unique solution, so that $\bA_{J}$ is full rank.
	If this basis gives rise to a basic feasible solution of \cref{auxiliary}, then $(\bA_J^{-1} \xi)_i \geq 0$ for all $i \notin S(\bm{v})$.
	To conclude, basic feasible solutions to \cref{auxiliary} are of the form $\bm{q}_J = \bA_J^{-1} \xi$ and $\bm{q}_{J^C}= \bz$, where $J \supseteq S(\bm{v})$ satisfies
	\begin{equation}\label{req_1}
		|J| = k, \operatorname{rank}(\bA_J) = k, (\bA_J^{-1} \xi)_i \geq 0 \quad \forall i \notin S(\bm{v})\,.
	\end{equation} 
	Conversely, every set $J$ satisfying these requirements gives rise to a basic feasible solution of \cref{auxiliary}.

	On the other hand, if  $\bm{y} = t_n^{-1}(\bx(\I; \bb + t_n \xi) - \bm{v})$ for some $\I \in \cI(\bb + t_n \xi)$ such that $\bx(\I;  \bb) = \bm{v}$, then $\bm{y}_{\I} = t_n^{-1} \bA_\I^{-1}(\bb + t_n \xi - \bb) = \bA_\I^{-1} \xi$ and $\bm{y}_{\I^C} = \bz$.
	Moreover, the requirement that $\I \in \cI(\bb + t_n \xi)$ implies that $\bm{y}_{\I} = \bA_\I^{-1} \xi \geq - t_n^{-1} \bm{v}_\I$, since this is equivalent to $\I$ being feasible for $\bb + t_n \xi$, and the requirement that $\bx(\I; \bb) = \bm{v}$ implies $S(\bm{v}) \subseteq I$.
	To conclude, $\bm{y}$ is a vector such that $\bm{y}_\I = \bA_\I^{-1} \xi$ and $\bm{y}_{\I^C} = \bz$, where $\I \supseteq S(\bm{v})$ satisfies
	\begin{equation}\label{req_2}
		|\I| = k, \operatorname{rank}(\bA_\I) = k, \bA_\I^{-1} \xi \geq -t_n^{-1} \bm{v}_{\I}\,.
	\end{equation}
	Conversely, any set $\I$ satisfying these properties gives rise to a vector $\bm{y}$ of the form $t_n^{-1}(\bx(\I; \bb + t_n \xi) - \bm{v})$ for some $\I \in \cI(\bb + t_n \xi)$ such that $\bx(\I;  \bb) = \bm{v}$.
	
	We now notice that \cref{req_1} and \cref{req_2} are nearly the same.
	Clearly, all sets $I \supseteq S(\bm{v})$ satisfying \cref{req_2} also satisfy $(\bA_I^{-1} \xi)_i \geq 0 \quad \forall i \notin S(\bm{v})$, since if $i \notin S(\bm{v})$ this is equivalent to the requirement that $(\bA_\I^{-1} \xi)_i \geq -t_n^{-1} \bm{v}_i = 0$ in \cref{req_2}.
	Conversely, for $t_n$ sufficiently small, every set $J\supseteq S(\bm{v})$ satisfying \cref{req_1} also satisfies $\bA_J^{-1} \xi \geq -t_n^{-1} \bm{v}_J$
	This is because every coordinate of the vector $\bA_J^{-1} \xi$ is bounded, uniformly in $J$.
	So for $t_n$ small enough, if $\bm{v}_i > 0$, we will have $(\bA_J^{-1} \xi)_i \geq -t_n^{-1} \bm{v}_i$.
	Therefore, for $t_n$ small enough, the allowable subsets in \cref{req_1} and \cref{req_2} agree.
	In other words, the basic feasible solutions to \cref{auxiliary} are precisely the vectors of the form $t_n^{-1}(\bx(\I; \bb + t_n \xi) - \bm{v})$ for some $\I \in \cI(\bb + t_n \xi)$ such that $\bx(\I;  \bb) = \bm{v}$.
	Moreover, it is now easy to see that \emph{optimal} vertices in \cref{auxiliary} correspond to vectors of the form $t_n^{-1}(\bx(\I; \bb + t_n \xi) - \bm{v})$ for some $\I \in \cI^*(\bb + t_n \xi)$ (i.e., the set of \emph{optimal} bases) for which $\bx(\I;  \bb) = \bm{v}$.
	Indeed, in each case we simply need to select the subset of vertices that minimize the inner product with $\bm{c}$: that is obviously true in the case of solutions to \cref{auxiliary}, and by linearity a basic feasible solution $\bx(\I; \bb + t_n \xi)$ minimizes the inner product with $\bm{c}$ if and only if $t_n^{-1}(\bx(\I; \bb + t_n \xi) - \bm{v})$ minimizes the inner product with $\bm{c}$.
	
	We conclude that for $t_n$ small enough (depending only on $\bm{v}$ and not on $\alpha$), for all $\alpha \in K_{\bm{v}}$,
	\begin{equation}
		\max_{\I \in \cI^*(\bb + t_n \xi): \bx(\I; \bb) = \bm{v}} \langle \alpha, \bx(\I; \bb + t_n \xi) - \bm{v} \rangle = \max_{\bm{x} \in \bm{q}_\alpha^*(\xi)} \langle \alpha, t_n \bm{x} \rangle = t_n \limfunc{\xi}(\alpha)\,.
	\end{equation}
	Therefore
	\begin{equation}
		\lim_{t_n \searrow 0} \frac{H(\bb + t_n \xi)(\alpha) - H(\bb)(\alpha)}{t_n} = \limfunc{\xi}(\alpha)
	\end{equation}
	uniformly on $K_{\bm{v}}$, as claimed.
\end{proof}
\begin{proof}[Proof of Corollary~\ref{non_unique_cor}]
	The functional $f \mapsto \sup_{\alpha \in \mathbb S^{m-1}} f(\alpha)$ is clearly a continuous map from  $\cC( \mathbb S^{m-1})$ to $\RR$, so the continuous mapping theorem combined with \cref{thm:non-unique} implies
	\begin{equation*}
		r_n \sup_{\alpha \in \mathbb S^{m-1}} |\supp_n(\alpha) - \supp(\alpha)| \weakto \sup_{\alpha \in \mathbb S^{m-1}} |\limfunc{\GG}(\alpha)|\,.
	\end{equation*}
	Combined with \cref{isometry}, this implies
	\begin{equation*}
		r_n 	\haus(\bx^*(\bb_n), \bx^*(\bb)) \weakto  \sup_{\alpha \in \mathbb S^{m-1}} |\limfunc{\GG}(\alpha)|\,.
	\end{equation*}
\end{proof}
\subsection{Proofs for \cref{unique}}\label{unique-proofs}
The results of this section will follow from specializing the results of \cref{non-unique} to the case where the target solution $\bx^*(\bb)$ is unique.
\begin{proof}[Proof of \cref{thm:unique}]
	We will apply \cref{thm:non-unique}.
	We first need to verify that the sense of convergence is the same, and then that the expressions for the limit agree.
	The random solution set $\bx^*(\bb_n)$ is nonempty with probability approaching $1$ as $n \to \infty$ by Lemma~\ref{continuity_lemmas}.
	The sets on the left side of the limit in \cref{thm:unique} are therefore (on an event of probability approaching one) non-empty, convex, compact sets. If $K_1, \dots,$ is a sequence of such random sets, \citet[Theorem 6.13]{molchanov2005theory} implies that it converges weakly to a random set $K$ if and only if for any $N \in \NN$, $\alpha_1, \dots, \alpha_N \in \mathbb{S}^{m -1}$, the vector $(\supp_{K_n}(\alpha_1), \dots, \supp_{K_n}(\alpha_N))$ converges to $(\supp_{K}(\alpha_1), \dots, \supp_{K}(\alpha_N))$, and the sets are tight, in the sense that $\lim_{c \to \infty} \sup_n \p{\|K_n\| \geq c} \to 0$.
	
	To compute the support function of $r_n(\bx^*(\bb_n) - \bx^*(\bb))$, we use the fact that $\bx^*(\bb)$ is a singleton to write
	\begin{align*}
		\sup_{\bx \in r_n(\bx^*(\bb_n) - \bx^*(\bb))} \langle \alpha, \bx \rangle & = \sup_{\bx' \in \bx^*(\bb_n)} \langle \alpha, r_n (\bx' - \bx^*(\bb)) \\
		& = r_n \sup_{\bx' \in  \bx^*(\bb_n)} \langle \alpha, \bx' \rangle - r_n \langle \alpha, \bx^*(\bb) \rangle \\
		& = r_n(\supp_n(\alpha) - \supp(\alpha))\,,
	\end{align*}
	where $\supp_n$ and $\supp$ are as in \cref{thm:non-unique}.
	\Cref{thm:non-unique} shows that the support function of $r_n(\bx^*(\bb_n) - \bx^*(\bb))$ converges to $\limfunc{\GG}$.
The tightness condition is therefore trivially satisfied, so we will be done as long as we can show that the function $\limfunc{\GG}$ is the support function of the set $\bm{p}_\bb^*(\GG)$.
	Since $\supp(\alpha) = \langle \alpha, \bx^*(\bb) \rangle$, the gradient $\nabla h(\alpha)$ is identically equal to $\bx^*(\bb)$, so that the linear program \cref{non_unique_lp} reduces to \cref{unique_auxiliary_lp}.
	Since $\limfunc{\GG}$ is defined as the supremum of a linear functional, we may replace the set $\bm{q}^*(\GG)$ of optimal vertices by its convex hull $\operatorname{conv}(\bm{q}^*(\GG))$, and we just need to show that this set agrees with $\bm{p}_\bb^*(\GG)$ to show that $\limfunc{\GG}$ is its support function.
	To do so, we use the fact that the recession cone of $\bm{p}_\bb^*(\GG)$ is $\{\bz\}$ when $\bx^*(\bb)$ is unique.
	To see this, first observe that a vector in the recession cone must satisfy $\langle \bc, \bm{d} \rangle = 0, \bA \bm{d} = 0, \bm{d}_i \geq 0$ for all $i \notin S(\bx^*(\bb))$.
	If $\bm{d}$ is such a vector, then for $\epsilon > 0$ small enough the vector $\bx^*(\bb) + \epsilon \bm{d}$ is also optimal for \cref{primal}.
	Indeed this vector satisfies the linear constraints and has the same objective value, and for $\epsilon$ sufficiently small no coordinates of $\bx^*(\bb) + \epsilon \bm{d}$ will be negative.
	Since we have assumed that $\bx^*(\bb)$ is a singleton, we must have that $\bm{d} = \bz$, so that the recession cone of the optimal set in this LP is $\{\bz\}$.
	Therefore $\bm{p}_\bb^*(\GG) = \operatorname{conv}(\bm{q}^*(\GG))$, and therefore $\limfunc{\GG}$ is the support function of $\bm{p}_\bb^*(\GG)$, proving the claim.
\end{proof}

\begin{proof}[Proof of Corollary \ref{unique_cor}]
	For any vector $\bx$, the functional $S \mapsto d(S, \bx)$ is continuous with respect to the Hausdorff distance.
	The continuous mapping theorem therefore implies that
	\begin{equation*}
		d(r_n (\bx^*(\bb_n) - \bx^*(\bb)), \bz) \weakto d(\bm{p}_\bb^*(\GG), \bz)\,.
	\end{equation*}
	It then suffices to note that the quantity on the left is equal to $r_n d(\bx^*(\bb_n),  \bx^*(\bb))$.
\end{proof}
\subsection{Proofs for \cref{confidence}}\label{confidence_proofs}

\begin{proof}[Proof of \cref{confidence_set}]
Define $\bm{A}_n\coloneqq (\bm{A};\bm{e}_{\I_n^c}) \in \RR^{m \times m}$ to be the matrix whose first $k$ rows are $\bm{A}$ and whose remaining $m-k$ rows consist of the elementary basis vectors $\bm{e}_i$ for $i \notin \I_n$.
	Since $\I_n$ is a basis, $\bA_n$ has full rank.
	Moreover, the fact that the basis $\I_n$ corresponds to $\hat \bx_n$ implies that $S(\hat \bx_n) \subseteq \I_n$.
	Therefore $\bA_n \hat \bx_n = \bb^0_n$, where $\bb^0_n \defeq (\bb_n, \bz) \in \RR^m$ is the augmented vector whose first $k$ coordinates are $\bb_n$ and whose remaining $m - k$ coordinates are zero.
	Similarly, $\bA_n \bx(\I_n; \bb) = \bb^0$, where $\bb^0 \in \RR^m$ is defined in an analogous way.
	
We obtain
	\begin{equation}
	r_n \bm{A}_n(\hat{\bm{x}}_n-\bm{x}(\I_n;\bm{b}))=r_n(\bm{b}^0_n-\bm{b}^0)\xrightarrow[]{D}\mathbb{G}^0,
\label{confidence set: conv dist part}
	\end{equation}
where as above $\mathbb{G}^0$ is the random variable obtained by appending $m -k$ zeroes to $\mathbb G$.
	
	We will now show that $r_n(\proj - \bx(\I_n; \bb)) \overset{p}{\to} \bz$, so that we can replace $\bx(\I_n; \bb)$ by $\proj$ in the limit.
	By \cref{no_disappearing}, there exists a constant $\delta > 0$ such that if $\|\bb_n - \bb\| \leq \delta$, then $\cI^*(\bb_n) \subseteq \cI^*(\bb)$.
	Since $\I_n \in \cI^*(\bb_n)$ by assumption, this fact implies that if $\|\bb_n - \bb\| \leq \delta$, then $\I_n$ is an optimal basis for the target problem, i.e., $\bx(\I_n; \bb) \in \bx^*(\bb)$.
	In particular, on the event that $\|\bb_n - \bb\| \leq \delta$, we have $\proj = \bx(\I_n; \bb)$.
	The distributional convergence assumption \cref{b_limit} implies $\bb_n \overset{p}{\to} \bb$.
	We therefore have that $\p{r_n \|\proj - \bx(\I_n; \bb)\| > 0 } \leq \p{\|\bb_n - \bb\| > \delta} \to 0$ as $n \to \infty$, so that $r_n(\proj - \bx(\I_n; \bb)) \overset{p}{\to} \bz$.
	Combining this fact with \cref{confidence set: conv dist part} yields
	\begin{equation}
		r_n\bm{A}_n(\hat{\bm{x}}_n-\proj)\xrightarrow[]{D}\mathbb{G}^0,
		\label{confidence set: conv final}
	\end{equation}

	If we define $G_\alpha$ as in the theorem, we therefore obtain that
	\begin{equation}
		\liminf_{n \to \infty} \p{r_n(\hat{\bm{x}}_n-\proj) \in \bm{A}_n^{-1} G^0_\alpha} = 		\liminf_{n \to \infty} \p{r_n\bm{A}_n(\hat{\bm{x}}_n-\proj) \in G^0_\alpha} \geq 1 - \alpha
	\end{equation}
	where $G^0_\alpha \in \RR^{m}$ is obtained from $G_\alpha \in \RR^k$ by padding each vector with zeros.
	To conclude, we note that $\bm{A}_n^{-1} G^0_\alpha = \bx(\I_n; G_\alpha)$.
	Indeed, for any $\bm{G} \in G_\alpha$, the definition of $\bm{A}_n$ implies that the first $k$ coordinates of $\bm{A}_n \bx(\I_n; \bm{G})$ are $\bm{A} \bx(\I_n; \bm{G}) = \bm{G}$ and the last $m-k$ coordinates are zero.
	Therefore $G^0_\alpha = \bm{A}_n \bx(\I_n; G_\alpha)$, and since $\bm{A}_n$ is invertible this proves the claim.
\end{proof}
Corollary~\ref{confidence_cor} is an immediate consequence.
\begin{proof}[Proof of Corollary~\ref{confidence_cor}]
	If $r_n(\hat \bx_n - \proj) \in \bx(\I_n; G_\alpha)$, then there exists an $\bx \in \bx(\I_n; G_\alpha)$ such that $\proj = \hat \bx_n - r_n^{-1} \bx$.
	Since $\proj \in \bx^*(\bb)$, the result follows from \cref{confidence_set}.
\end{proof}
\bibliography{ref}

\end{document}